\documentclass[12pt,a4paper,reqno]{amsart} %Fr o m detta
\pagestyle{plain}
\usepackage{footnote}
\usepackage{amssymb}
\usepackage{latexsym}
\usepackage{amsmath}
\usepackage{mathrsfs}
\usepackage[X2,T1]{fontenc}
\usepackage[applemac]{inputenc}
\usepackage{cite}
\usepackage{color}
\usepackage{calc}                   %Detta beh?vs f?r att
                    %%fixa%
                    %%%med numrering som man vill.

%\newcommand{\essinf}{\operatorname{ess\, inf}}
%\newcommand{\mathscr}[1]{\mathcal #1}

\newcommand{\scal}[2]{\langle #1,#2\rangle}
\newcommand{\rr}[1]{\mathbf R^{#1}}

\newcommand{\cc}[1]{\mathbf C^{#1}}
\newcommand{\nm}[2]{\Vert #1\Vert _{#2}}

\newcommand{\sets}[2]{\{ \, #1\, ;\, #2\, \} }

\newcommand{\cdo}{\, \cdot \, }
\newcommand{\supp}{\operatorname{supp}}

\newcommand{\eabs}[1]{\langle #1\rangle}     %%%%%   for <x>

\newcommand{\vrum}{\vspace{0.1cm}}

\newcommand{\IM}{\operatorname{Im}}

\newcommand{\nn}[1]{{\mathbf N}^{#1}}
\newcommand{\maclA}{\mathcal A}

\newcommand{\maclH}{\mathcal H}
\newcommand{\maclE}{\mathcal E}

\newcommand{\maclR}{\mathcal R}
\newcommand{\maclS}{\mathcal S}

\newcommand{\mascE}{\mathscr E}

\newcommand{\mascH}{\mathscr H}

\newcommand{\mascS}{\mathscr S}

\setcounter{section}{\value{section}-1}   %fixar sektionsnumer s? att
%                                         %dessa b?rjar med 0.

\numberwithin{equation}{section}          %Detta g?r att man f?r
%                                         %formelnummer av typ
%                                         %(sec.nr).

\newtheorem{thm}{Theorem}
\numberwithin{thm}{section}

\newcommand{\rubrik}{}
\newtheorem{prop}[thm]{Proposition}

\newtheorem{lemma}[thm]{Lemma}

\theoremstyle{definition}

\newtheorem{defn}[thm]{Definition}

\theoremstyle{remark}
\newtheorem{rem}[thm]{Remark}

%
%\author{Elmira Nabizadeh}
%
%\address{Department of Mathematics,
%Linn{\ae}us University, V{\"a}xj{\"o}, Sweden}
%
%\email{elmira.nabizadehmorsalfard@lnu.se}
%
%
%\author{Christine Pfeuffer}
%
%\address{Department of Mathematics,
%University of Regensburg, Regensburg, Germany}
%
%\email{christine.pfeuffer@mathematik.uni-regensburg.de}

\author{Joachim Toft}

\address{Department of Mathematics,
Linn{\ae}us University, V{\"a}xj{\"o}, Sweden}

\email{joachim.toft@lnu.se}

\title{Paley-Wiener properties for spaces of power series expansions}

\keywords{Bargmann transform}

\subjclass[2010]{primary 46F05; 32A25; 32A36;
secondary 35Q40; 30Gxx}

\frenchspacing

\begin{document}

%\begin{savenotes}

\begin{abstract}
We extend  Paley-Wiener results in the Bargmann setting deduced
in \cite{NaPfTo1} to larger class of power series expansions. At the
same time we deduce characterisations of all Pilipovi{\'c} spaces
and their distributions (and not only of low orders as in \cite{NaPfTo1}).
\end{abstract}

\maketitle

\par

%%%%%%%%%%%%%%%%%%%%%%%
\section{Introduction}\label{sec0}
%%%%%%%%%%%%%%%%%%%%%%%

\par

Classical Paley-Wiener theorems characterize functions and distributions
with certain restricted supports in terms of estimates of their 
Fourier-Laplace transforms. For example, let $f$ be a distribution
on $\rr d$ and let $B_{r_0}(0)\subseteq \rr d$ be the ball with center
at origin and radius $r_0$. Then $f$ is supported in $B_{r_0}(0)$ if 
and only if 
$$
|\widehat{f}(\zeta)| \lesssim \eabs\zeta ^{N} e^{r_0 |\IM (\zeta )|}, 
\quad \zeta\in \cc d,
$$
for some $N\ge 0$.

\par

In \cite{NaPfTo1,Toft15} a type of Paley-Wiener results were deduced for
certain spaces of entire functions, where the usual Fourier transform were
replaced by the reproducing kernel $\Pi _A$ of the Bargmann transform.
(See \cite{Ho1} and Section \ref{sec1} for notations.)
This reproducing kernel is an analytic global Fourier-Laplace transform
with respect to a suitable Gaussian measure. For example, in \cite{Toft15}
it is proved that if  $A(\cc d)$ is the set of all entire functions and
$\maclA _{\flat _\sigma}(\cc d)$ ($\maclA _{0,\flat _\sigma}(\cc d)$),
$\sigma >0$, is the set of all $F\in A(\cc d)$
such that $|F(z)|\lesssim e^{r|z|^{\frac {2\sigma}{\sigma +1}}}$ for some $r>0$
(for every $r>0$),
then
\begin{equation}\label{Eq:FirstRel}
\maclA _{\flat _1}(\cc d)
=
\Pi _A(\mascE '(\cc d)) = \Pi _A(\mascE '(\cc d)\cap L^\infty (\cc d)).
\end{equation}
The latter equality in \eqref{Eq:FirstRel} is in \cite{NaPfTo1}
refined into the relation
$$
F_0\in \maclA _{\flat _1}(\cc d)
\quad \Leftrightarrow \quad
F_0 = \Pi _A(\chi \cdot F)
$$
for some characteristic function $\chi$ of a polydisc $D$,
centered at origin, and a function $F$ which is defined and
analytic in a neighbourhood of $D$. The spaces
$\maclA _{0,\flat _1}(\cc d)$ is also characterized in
\cite{NaPfTo1} by
$$
\maclA _{0,\flat _1}(\cc d)
=
\Pi _A(\chi \cdot A(\cc d))
$$
when $\chi$ above is fixed.
%
%%%
%\begin{equation}\label{Eq:SecondRel}
%\maclA _{\flat _1}(\cc d)
%=
%\sets {\Pi _A(F\cdot \chi )}{F\in A(\cc d)}.
%%\quad \Leftrightarrow \quad
%%F_0=\Pi _A(\chi \cdot F)
%\end{equation}
%%%

\par

In fact, in \cite{NaPfTo1}, similar characterizations are deduced for
all $\maclA _{\flat _\sigma}(\cc d)$ and $\maclA _{0,\flat _\sigma}(\cc d)$
with $\sigma \le 1$ as well as
for the set $\maclA _s(\cc d)$ when $s\in [0,\frac 12)$ which consists
of all $F\in A(\cc d)$ such that $|F(z)|\lesssim e^{r(\log \eabs z)^{\frac 1{1-2s}}}$.
For example, let $\chi$ be the characteristic
function of a polydisc, centered at origin in $\cc d$. Then it is proved in
\cite{NaPfTo1} that
\begin{alignat*}{3}
\maclA _{\flat _\sigma}(\cc d)
&=
\sets {\Pi _A(F\cdot \chi )}{F\in \maclA _{\flat _{\sigma _0}}(\cc d) }&
\quad &\text{when}&\quad
\sigma &<\frac 12,\ \sigma _0=\frac \sigma {1-2\sigma}
\intertext{and}
\maclA _s(\cc d)
&=
\sets {\Pi _A(F\cdot \chi )}{F\in \maclA _s(\cc d)}&
\quad &\text{when}&\quad
s &<\frac 12.
\end{alignat*}

\par

In Section \ref{sec2} we extend the set of characterizations given
in \cite{NaPfTo1} in different ways. Firstly we show that we can choose
$\chi$ above in a larger class of functions and measures. Secondly,
we characterize strictly larger spaces than $\maclA _{\flat _1}(\cc d)$.
More specifically, we characterize all the spaces
\begin{alignat}{6}
&\maclA _{\flat _\sigma}(\cc d)&
\quad &\text{and}&\quad
&\maclA _s(\cc d) &
\qquad
\big (
&\maclA _{0,\flat _\sigma}(\cc d)&
\quad &\text{and}& \quad
&\maclA _{0,s}(\cc d)
\big ),
\label{Eq:TheSmallerSpaces}
\intertext{for any $s,\sigma \in \mathbf R$, as well as the larger
spaces}
&\maclA _{\flat _\sigma}'(\cc d)&
\quad &\text{and}& \quad
&\maclA _s'(\cc d)&
\qquad
\big (
&\maclA _{0,\flat _\sigma}'(\cc d)&
\quad &\text{and}& \quad
&\maclA _{0,s}'(\cc d)
\big ),
\label{Eq:TheLargerSpaces}
\end{alignat}
the sets of all formal power series expansions
\begin{equation}\label{Eq:PowerSeriesExpIntro}
F(z) = \sum _{\alpha \in \nn d}c(F,\alpha )e_\alpha (z),\qquad
e_\alpha (z)= \frac {z^\alpha}{\sqrt {\alpha !}},\ \alpha \in \nn d
\end{equation}
such that
$$
|c(F,\alpha )|\lesssim h^{|\alpha |}\alpha !^{\frac 1{2\sigma}}
\quad \text{respective}\quad
|c(F,\alpha )|\lesssim e^{r|\alpha |^{\frac 1{2s}}}
$$
for every $h,r>0$ (for some $h,r>0$).
We notice that the spaces \eqref{Eq:TheSmallerSpaces} discussed
above can also be described as the sets of all formal power series
expansions \eqref{Eq:PowerSeriesExpIntro}
such that
$$
|c(F,\alpha )|\lesssim h^{|\alpha |}\alpha !^{-\frac 1{2\sigma}}
\quad \text{respective}\quad
|c(F,\alpha )| \lesssim e^{-r|\alpha |^{\frac 1{2s}}}
$$
for some $h,r>0$ (for every $h,r>0$) (cf.
\cite{FeGaTo2,Toft15} and Section \ref{sec1}).
We remark that for $s\ge \frac 12$, $\maclA _s(\cc d)$ and $\maclA _{0,s}(\cc d)$,
defined in this way, are still spaces of entire functions,
but with other types of estimates, compared to the case $s<\frac 12$
considered above (cf. \cite{Toft15} and Section \ref{sec1}).

\par

By the definitions we have
$\maclA _{\flat _1}'(\cc d)=A(\cc d)$, and by using the convention
$$
s<\flat _{\sigma _1} <\flat _{\sigma _2} <\frac 12
\quad \text{when}\quad \sigma _1<\sigma _2,
$$
it follows that
\begin{multline*}
\maclA _0(\cc d)\subseteq \maclA _{0,s_1}(\cc d) \subseteq \maclA _{s_1}(\cc d)
\subseteq
\maclA _{0,s_2}(\cc d)
\\[1ex]
\subseteq \maclA _{0,s_2}'(\cc d) \subseteq \maclA _{s_1}'(\cc d)
\subseteq \maclA _{0,s_1}'(\cc d),
\\[1ex]
\text{when}
\quad
s_1,s_2 \in
\mathbf R\cup \{ \flat _\sigma\} _{\sigma >0},\ s_1<s_2
\end{multline*}
(cf. \cite{Toft15}). In particular, if $\sigma >1$, then
$\maclA _{\flat _\sigma}'(\cc d)$
and $\maclA _{0,\flat _\sigma}'(\cc d)$ are contained in $A(\cc d)$,
and in \cite{Toft15}, it is proved that the former spaces consists of
all $F\in A(\cc d)$ such that
$|F(z)|\lesssim e^{r|z|^{\frac {2\sigma}{\sigma -1}}}$
for every $r>0$ respective for some $r>0$.

\par

In Section \ref{sec2} we show among others that if $T_{\chi}$ is the map
$F\mapsto \Pi _A(\chi \cdot F)$ with $\chi$ as above, then
the mappings
\begin{alignat*}{5}
T_{\chi}\, &:\, & \maclA _{0,s}'(\cc d) &\to  \maclA _{0,s}'(\cc d), & \quad
T_{\chi}\, &:\, & \maclA _{s}'(\cc d) &\to  \maclA _{s}'(\cc d), & \quad s &<\frac 12,
\\[1ex]
T_{\chi}\, &:\, & \maclA _{0,\flat _{\sigma _0}}'(\cc d) &\to 
\maclA _{0,\flat _\sigma}'(\cc d), & \quad
T_{\chi}\, &:\, & \maclA _{\flat _{\sigma _0}}'(\cc d) &\to  
\maclA _{\flat _{\sigma}}'(\cc d),& \quad \sigma _0 &= \frac \sigma{2\sigma +1},
\intertext{when $\sigma >0$,}
T_{\chi}\, &:\, & \maclA _{0,\flat _{\sigma _0}}'(\cc d) &\to 
\maclA _{\flat _\sigma}(\cc d), & \quad
T_{\chi}\, &:\, & \maclA _{\flat _{\sigma _0}}'(\cc d) &\to  
\maclA _{0,\flat _{\sigma}}(\cc d),& \quad \sigma _0 &= \frac \sigma{2\sigma -1},
\intertext{when $\sigma >\frac 12$, and}
T_{\chi}\, &:\, & \maclA _{0,\flat _{\sigma _0}}(\cc d) &\to 
\maclA _{0,\flat _\sigma}(\cc d), & \quad
T_{\chi}\, &:\, & \maclA _{\flat _{\sigma _0}}(\cc d) &\to  
\maclA _{\flat _{\sigma}}(\cc d),& \quad \sigma _0 &= \frac \sigma{1-2\sigma},
\end{alignat*}
when $\sigma <\frac 12$, are well-defined and bijective.

\par

In Section \ref{sec3} we apply these mapping properties to deduce
characterizations of Pilipovi{\'c} spaces and their distribution spaces
in terms of images of the adjoint of Gaussian windowed short-time
Fourier transform, acting on suitable spaces which are strongly linked to
the spaces in \eqref{Eq:TheSmallerSpaces} and \eqref{Eq:TheLargerSpaces}.
These spaces are
obtained by imposing the same type of estimates on their Hermite
coefficients as for the power series coefficients of the spaces in
\eqref{Eq:TheSmallerSpaces} and \eqref{Eq:TheLargerSpaces}. It turns
out that Pilipovi{\'c} spaces and their distribution spaces are exactly
the counter images of the spaces in
\eqref{Eq:TheSmallerSpaces} and \eqref{Eq:TheLargerSpaces}
under the Bargmann transform (cf. \cite{Toft15}).

\par

%%%%%%%%%%%%%%%%%%%%%%%
\section{Preliminaries}\label{sec1}
%%%%%%%%%%%%%%%%%%%%%%%

\par

In this section we recall some basic facts. We start by discussing
%Gelfand-Shilov spaces and their properties. Thereafter we recall
%the definition of
Pilipovi{\'c} spaces and some of their properties.
Then we recall some facts on modulation spaces. Finally
we recall the Bargmann transform and some of its
mapping properties, and introduce suitable classes of entire functions on
$\cc d$.

\par

%%%%%%%
\subsection{Spaces of sequences}\label{subsec1.1}
%%%%%%%

\par

The definitions of Pilipovi{\'c} spaces and spaces of power series
expansions are based on certain spaces of sequences on $\nn d$, indexed
by the extended set
$$
\mathbf R_\flat = \mathbf R_+\bigcup \sets {\flat _\sigma}{\sigma \in \mathbf R_+},
$$
of $\mathbf R_+$. We extend the extend the inequality relations on
$\mathbf R_+$ to the set $\mathbf R_\flat$, by letting
$$
s_1<\flat _\sigma <s_2
\quad \text{and}\quad \flat _{\sigma _1}<\flat _{\sigma _2}
$$
when $s_1<\frac 12\le s_2$ and $\sigma _1<\sigma _2$. (Cf. \cite{Toft15}.)

\par

\begin{defn}\label{Def:SeqSpaces}
Let $s\in \mathbf R_\flat$ and $\sigma \in \mathbf R_+$.
\begin{enumerate}
\item The set $\ell _0'(\nn d)$ consists of all formal
sequences $a=\{ a(\alpha )\}_{\alpha \in \nn d}\subseteq \mathbf C$,
and $\ell _0(\nn d)$ is the set of all $a\in \ell _0'(\nn d)$ such that
$a(\alpha )\neq 0$ for at most finite numbers of $\alpha \in \nn d$;

\vrum

\item If $r,s\in \mathbf R_+$
and $a\in \ell _0'(\nn d)$, then the Banach spaces
$$
\ell _{r,s}^\infty (\nn d),
\quad
\ell _{r,s}^{\infty ,*} (\nn d),
\quad
\ell _{r,\flat _\sigma}^\infty (\nn d)
\quad \text{and}\quad
\ell _{r,\flat _\sigma }^{\infty ,*} (\nn d)
$$
consists of all $a\in \ell _0'(\nn d)$ such that their corresponding
norms
\begin{alignat*}{2}
\nm a{\ell _{r,s}^\infty}
&\equiv
\sup _{\alpha \in \nn d} |a(\alpha )e^{r|\alpha |^{\frac 1{2s}}}|,
&\quad
\nm a{\ell _{r,s}^{\infty ,*}}
&\equiv
\sup _{\alpha \in \nn d} |a(\alpha )e^{-r|\alpha |^{\frac 1{2s}}}|
\\[1ex]
\nm a{\ell _{r,\flat _\sigma}^\infty}
&\equiv
\sup _{\alpha \in \nn d} |a(\alpha )
r^{-|\alpha |} \alpha !^{\frac 1{2\sigma }}|,
&\quad
\nm a{\ell _{r,\flat _\sigma}^{\infty ,*}}
&\equiv
\sup _{\alpha \in \nn d} |a(\alpha )
r^{-|\alpha |} \alpha !^{-\frac 1{2\sigma }}|,
\end{alignat*}
respectively, are finite;

\vrum

\item The space $\ell _s(\nn d)$ ($\ell _{0,s}(\nn d)$) is the
inductive limit (projective limit) of $\ell _{r,s}^\infty (\nn d)$
with respect to $r>0$, and $\ell _s'(\nn d)$ ($\ell _{0,s}'(\nn d)$) is the
projective limit (inductive limit) of $\ell _{r,s}^{\infty ,*} (\nn d)$
with respect to $r>0$.
\end{enumerate}
\end{defn}

\par

We also let $\ell _{0,N}(\nn d)$ be the set of all $a\in \ell _0'(\nn d)$
such that $a(\alpha )=0$ when $|\alpha |\ge N$. Then
$$
\ell _0(\nn d)= \bigcup _{N\ge 0}\ell _{0,N}(\nn d),
$$
and $\ell _{0,N}(\nn d)$ is a Banach space under the norm
$$
\nm a{\ell _{0,N}} \equiv \sup _{|\alpha |\le N}|a(\alpha )|.
$$
We equip $\ell _0(\nn d)$ with the inductive limit topology of
$\ell _{0,N}(\nn d)$, and supply $\ell _0'(\nn d)$ with Fr{\'e}chet
space topology through the semi-norms $\nm \cdo{\ell _{0,N}}$.

\par

In what follows, $(\cdo ,\cdo )_{\mascH}$ denotes the scalar
product in the Hilbert space $\mascH$.

\par

\begin{rem}\label{Rem:SeqSpaces}
For the spaces in Definition \ref{Def:SeqSpaces}, the following is true.
We leave the verifications for the reader.
\begin{enumerate}
\item If $s\in \mathbf R_\flat$ then
\begin{alignat*}{2}
\ell _{0,s} (\nn d)&= \bigcap _{r>0} \ell ^\infty _{r,s}(\nn d),&
\quad
\ell _s(\nn d) &= \bigcup _{r>0} \ell ^\infty _{r,s}(\nn d),
\\[1ex]
\ell _s'(\nn d) &= \bigcap _{r>0} \ell ^{\infty ,*} _{r,s}(\nn d)&
\quad \text{and}\quad
\ell _{0,s}'(\nn d) &= \bigcup _{r>0} \ell ^{\infty ,*}_{r,s}(\nn d) \text ;
\end{alignat*}

\vrum

\item The space $\ell _0(\nn d)$ is dense in all of the spaces
in Definition \ref{Def:SeqSpaces} (1) and (3);

\vrum

\item If $s_1\in \mathbf R_\flat$ and
$s_2\in \overline{\mathbf R}_\flat$, then the map
$(a,b)\mapsto (a,b)_{\ell ^2(\nn d)}$ from $\ell _0(\nn d)\times \ell _0(\nn d)$
to $\mathbf C$ is uniquely extendable to continuous mappings from
\begin{alignat*}{2}
&\ell _{0,s_1}'(\nn d)\times \ell _{0,s_1}(\nn d)&
\quad
&\ell _{0,s_1}(\nn d)\times \ell _{0,s_1}'(\nn d),
\\[1ex]
&\ell _{s_2}'(\nn d)\times \ell _{s_2}(\nn d)&
\quad \text{or}\quad
&\ell _{s_2}(\nn d)\times \ell _{s_2}'(\nn d)
\end{alignat*}
to $\mathbf C$. The duals of $\ell _{0,s_1}(\nn d)$ and $\ell _{s_2}(\nn d)$
can be identified by $\ell _{0,s_1}(\nn d)$ respective $\ell _{s_2}(\nn d)$,
through these extensions of the form $(\cdo ,\cdo )_{\ell ^2(\nn d)}$.
\end{enumerate}
\end{rem}

\par

\subsection{Pilipovi{\'c} spaces and spaces of power series expansions
on $\cc d$}\label{subsec1.2}

\par

We recall that
the Hermite function of order $\alpha \in \nn d$ is defined by
$$
h_\alpha (x) = \pi ^{-\frac d4}(-1)^{|\alpha |}
(2^{|\alpha |}\alpha !)^{-\frac 12}e^{\frac {|x|^2}2}
(\partial ^\alpha e^{-|x|^2}).
$$
It follows that
$$
h_{\alpha}(x)=   ( (2\pi )^{\frac d2} \alpha ! )^{-1}
e^{-\frac {|x|^2}2}p_{\alpha}(x),
$$
for some polynomial $p_\alpha$ of order $\alpha$
on $\rr d$,
called the Hermite polynomial of order $\alpha$. 
The Hermite functions are eigenfunctions to the Fourier transform, and
to the Harmonic oscillator $H_d\equiv |x|^2-\Delta$ which acts on functions
and (ultra-)distributions defined
on $\rr d$. More precisely, we have
$$
H_dh_\alpha = (2|\alpha |+d)h_\alpha ,\qquad H_d\equiv |x|^2-\Delta .
$$

\par

It is well-known that
the set of Hermite functions is a basis for $\mascS (\rr d)$ 
and an orthonormal basis for $L^2(\rr d)$ (cf. \cite{ReSi}).
In particular, if $f,g\in L^2(\rr d)$, then
$$
\nm f{L^2(\rr d)}^2 = \sum _{\alpha \in \nn d}|c_h(f,\alpha )|^2
\quad \text{and}\quad
(f,g)_{L^2(\rr d)} = \sum _{\alpha \in \nn d}c_h(f,\alpha )
\overline{c_h(g,\alpha )},
$$
where
\begin{align}
f(x) &= \sum _{\alpha \in \nn d}c_h(f,\alpha )h_\alpha (x)
\label{Eq:HermiteExp}
\intertext{is the Hermite seriers expansion of $f$, and}
c_h(f,\alpha ) &= (f,h_\alpha )_{L^2(\rr d)}
\label{Eq:HermiteCoeff}
\end{align}
is the Hermite coefficient of $f$ of order $\alpha \in \rr d$.

\par

We shall also consider formal power series expansions on $\cc d$, centered
at origin. That is, we shall consider formal expressions of the form
\begin{equation}\label{Eq:PowerSeriesExp}
F(z) = \sum _{\alpha \in \nn d}c(F,\alpha )e_\alpha (z),\qquad
e_\alpha (z)= \frac {z^\alpha}{\sqrt {\alpha !}},\ \alpha \in \nn d.
\end{equation}

\par

\begin{defn}\label{Def:PilPowerSpaces}
%Let $s\in \mathbf R_\flat$ and $\sigma \in \mathbf R_+$.
The set $\maclH _0'(\rr d)$ consists of all formal hermite series
expansions \eqref{Eq:HermiteExp}, and $\maclA _0'(\cc d)$
consists of all formal power series expansions \eqref{Eq:PowerSeriesExp}.
The set $\maclH _0(\rr d)$ ($\maclA _0(\cc d)$) consists
of all $f\in \maclH _0'(\rr d)$ ($F\in \maclA _0'(\cc d)$)
such that $c_h(f,\alpha )\neq 0$ ($c(F,\alpha )\neq 0$) 
for at most finite numbers of $\alpha \in \nn d$.
\begin{enumerate}
\item If $s_1\in \mathbf R_\flat$ and $s_2\in \overline{\mathbf R}_\flat$,
then
\begin{align}
\maclH _{s_2}(\rr d), \quad \maclH _{0,s_1}(\rr d),
\quad \maclH _{0,s_1}'(\rr d)
\quad \text{and}\quad
\maclH _{s_2}'(\rr d)
\label{Eq:PilSpaces}
\end{align}
are the sets of all Hermite series expansions \eqref{Eq:HermiteExp} such that
their coefficients $\{ c_h(f,\alpha ) \} _{\alpha \in \nn d}$ belong
to $\ell _{s_2}(\nn d)$, $\ell _{0,s_1}(\nn d)$, $\ell _{0,s_1}'(\nn d)$
respective $\ell _{s_2}'(\nn d)$;

\vrum

\item If $s_1\in \mathbf R_\flat$ and $s_2\in \overline{\mathbf R}_\flat$,
then
\begin{align}
\maclA _{s_2}(\cc d), \quad \maclA _{0,s_1}(\cc d),
\quad \maclA _{0,s_1}'(\cc d)
\quad \text{and}\quad
\maclA _{s_2}'(\cc d)
\label{Eq:PowerSpaces}
\end{align}
are the sets of all power series expansions \eqref{Eq:PowerSeriesExp} such that
their coefficients $\{ c(F,\alpha ) \} _{\alpha \in \nn d}$ belong
to $\ell _{s_2}(\nn d)$, $\ell _{0,s_1}(\nn d)$, $\ell _{0,s_1}'(\nn d)$
respective $\ell _{s_2}'(\nn d)$.
\end{enumerate}
\end{defn}

\par

The spaces $\maclH _s(\rr d)$ and $\maclH _{0,s}(\rr d)$ in Definition
\ref{Def:PilPowerSpaces} are called
\emph{Pilipovi{\'c} spaces} of \emph{Roumieu} respectively \emph{Beurling types} of order $s$,
and
$\maclH _s'(\rr d)$ and $\maclH _{0,s}'(\rr d)$ are called
\emph{Pilipovi{\'c} distribution spaces} of \emph{Roumieu} respectively
\emph{Beurling types} of order $s$.

\par

\begin{rem}\label{Rem:PilPowerSpaces}
Let  $T_{\maclH}$ be the map from $\ell _0'(\nn d)$ to $\maclH _0'(\rr d)$
which takes the sequence $\{ c_h(f,\alpha ) \} _{\alpha \in \nn d}$
to the expansion \eqref{Eq:HermiteExp}, and let $T_{\maclA}$ be the
map from $\ell _0'(\nn d)$ to $\maclA _0'(\cc d)$
which takes the sequence $\{ c(F,\alpha ) \} _{\alpha \in \nn d}$
to the expansion \eqref{Eq:PowerSeriesExp}. Then it is clear
that $T_{\maclH}$ restricts to bijective mappings from
\begin{align}
\ell _{s_2}(\nn d), \quad \ell _{0,s_1}(\nn d),
\quad \ell _{0,s_1}'(\nn d)
\quad \text{and}\quad
\ell _{s_2}'(\nn d)
\label{Eq:SeqSpaces}
\end{align}
to respective spaces in \eqref{Eq:PilSpaces}, and that
$T_{\maclA}$ restricts to bijective mappings from the spaces
in \eqref{Eq:SeqSpaces} to respecive spaces in
\eqref{Eq:PowerSpaces}.

\par

We let the topologies of the spaces in \eqref{Eq:PilSpaces}
and \eqref{Eq:PowerSpaces} be inherited from the topologies
of respective spaces in \eqref{Eq:SeqSpaces}, through the
mappings $T_{\maclH}$ and $T_{\maclA}$.
\end{rem}

\par

The following result shows that Pilipovi{\'c} spaces of order
$s\in \mathbf R_+$ may in
convenient ways be characterized by estimates of powers
of harmonic oscillators applied on the involved functions.
We omit the proof since the result follows in the case $s\ge \frac 12$
from \cite{Pil2} and from \cite{Toft15} for general $s$.

\par

\begin{prop}\label{Prop:PilSpaceChar}
Let $s\ge 0$ ($s> 0$) be real, $H_d=|x|^2-\Delta$ be the
harmonic oscillator on $\rr d$ and set
$$
\nm f{(r,s)}\equiv \sup _{N\in \mathbf N}
\left (
\frac {\nm {H_d^Nf}{L^\infty (\rr d)}}{r^NN!^{2s}}
\right ),
\quad f\in C^\infty (\rr d).
$$
Then $f\in \maclH _s(\rr d)$ ($f\in \maclH _{0,s}(\rr d)$),
if and only if $\nm f{(r,s)}<\infty$ for some $r>0$ (for every
$r>0$). The topologies of $\maclH _s(\rr d)$ and
$\maclH _{0,s}(\rr d)$ agree with the inductive and projective
limit topologies, respectively, induced by the semi-norms
$\nm \cdo{(r,s)}$, $r>0$.
\end{prop}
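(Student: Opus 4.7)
The plan hinges on the eigenvalue relation $H_d h_\alpha =\lambda _\alpha h_\alpha$ with $\lambda _\alpha =2|\alpha |+d$, which diagonalises $H_d^N$ in the Hermite basis. I will use throughout that $\nm {h_\alpha}{L^\infty (\rr d)}$ and $\nm {h_\alpha}{L^1(\rr d)}$ are bounded by $\eabs \alpha ^{c_d}$ for some constant $c_d$; this is a classical estimate on Hermite functions that I would invoke as a black box. Stirling's formula in the form $N!^{2s}\asymp N^{2sN}e^{-2sN}(2\pi N)^s$ is the second standard ingredient.

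For the forward direction I would first take $f\in \maclH _s(\rr d)$, so that $|c_h(f,\alpha )|\le C\, e^{-r_0|\alpha |^{1/(2s)}}$ for some $r_0>0$ (respectively every $r_0>0$ in the Beurling case), and observe that $H_d^N f=\sum _\alpha c_h(f,\alpha )\lambda _\alpha ^N h_\alpha$ converges absolutely in $L^\infty (\rr d)$. A termwise bound reduces the estimate to analysing
$$
\sup _{t\ge 0}\, t^N e^{-r_0 t^{1/(2s)}},
$$
whose maximum is attained at $t_*=(2sN/r_0)^{2s}$ and equals $(2sN/r_0)^{2sN}e^{-2sN}$; Stirling rewrites this as at most $(C/r_0)^{2sN}N!^{2s}$. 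Summing over $\alpha$, at the cost of a slight decrease of the exponent $r_0$ to absorb the factor $\eabs \alpha ^{c_d}$ and the lattice-point count, yields $\nm {H_d^N f}{L^\infty}\le C\, r^N N!^{2s}$ for a suitable $r$, i.e.\ $\nm f{(r,s)}<\infty$.

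For the converse I would assume $\nm f{(r,s)}<\infty$ and recover decay of $c_h(f,\alpha )$. Because $h_\alpha \in \mascS (\rr d)$ has super-exponential decay together with all its derivatives, while $f\in C^\infty (\rr d)$ is locally integrable with $f\in L^\infty (\rr d)$ from the $N=0$ bound, successive integration by parts is justified and gives
$$
(H_d^N f,h_\alpha )_{L^2(\rr d)}=(f,H_d^N h_\alpha )_{L^2(\rr d)}=\lambda _\alpha ^N c_h(f,\alpha ).
$$
Combined with $|(H_d^N f,h_\alpha )_{L^2}|\le \nm {H_d^N f}{L^\infty}\nm {h_\alpha}{L^1}$ and the polynomial control of $\nm {h_\alpha}{L^1}$, this yields $|c_h(f,\alpha )|\lambda _\alpha ^N\lesssim r^N N!^{2s}\eabs \alpha ^{c_d'}$. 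I would then optimise in $N$ by the choice $N\sim (\lambda _\alpha /r)^{1/(2s)}$; Stirling produces $|c_h(f,\alpha )|\lesssim \eabs \alpha ^{c_d'}e^{-r'|\alpha |^{1/(2s)}}$ with $r'\sim r^{-1/(2s)}$, and a small decrease of $r'$ absorbs the polynomial factor, placing $f$ in $\maclH _s(\rr d)$ (respectively $\maclH _{0,s}(\rr d)$).

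The topological identification follows by tracking the quantitative dependences $r_0\mapsto r=(C/r_0)^{2s}$ and $r\mapsto r'=(c/r)^{1/(2s)}$ produced in the two halves; they give continuous inclusions between the two families of semi-normed spaces, and passing to the appropriate inductive/projective limits yields equality of topologies. The main obstacle I foresee is exactly this Stirling optimisation that translates exponential decay with exponent $1/(2s)$ into factorial growth $N!^{2s}$ and back with the correct constants; executing it cleanly, rather than up to harmless polynomial factors, is what upgrades a set-theoretic bijection to a topological isomorphism. The boundary case $s=0$ (Roumieu only) deserves a brief separate inspection: $N!^{2s}=1$ forces $\nm {H_d^N f}{L^\infty}\le Cr^N$, which by the converse argument forces $c_h(f,\alpha )=0$ whenever $\lambda _\alpha >r$, recovering the definition of $\maclH _0(\rr d)$ as the space of finite Hermite sums.
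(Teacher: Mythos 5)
Your argument is correct and is essentially the standard proof: the paper itself omits the proof of Proposition \ref{Prop:PilSpaceChar}, referring to \cite{Pil2} (for $s\ge \frac 12$) and \cite{Toft15} (for general $s$), and the argument there is exactly this diagonalisation of $H_d^N$ in the Hermite basis together with the Stirling optimisation carried out in both directions. The one step worth tightening is the identity $(H_d^Nf,h_\alpha )_{L^2}=\lambda _\alpha ^N c_h(f,\alpha )$ in the converse: the hypothesis controls $H_d^Nf$ in $L^\infty$ but not the intermediate derivatives of $f$ at infinity, so rather than literal integration by parts one should note that $f\in C^\infty \cap L^\infty \subseteq \mascS '(\rr d)$ and that the classical and distributional $H_d^Nf$ coincide (test against $C_c^\infty$ and use density), after which the identity is the definition of the distributional adjoint.
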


\par

\begin{rem}\label{Remark:GSHermite}
Let $\maclS _s(\rr d)$ and $\Sigma _s(\rr d)$
be the Fourier invariant Gelfand-Shilov spaces of order $s\in \mathbf R_+$
and of Rourmeu and Beurling types respectively (see \cite{Toft15} for notations).
Then it is proved in \cite{Pil1,Pil2} that
\begin{alignat*}{2}
\maclH _{0,s}(\rr d) &=\Sigma _s(\rr d)\neq \{ 0\} ,& \quad s&> \frac 12,
\\[1ex]
\maclH _{0,s}(\rr d) &\neq\Sigma _s(\rr d) = \{ 0\} ,& \ s&= \frac 12,
\intertext{and}
\maclH _s(\rr d) &=\maclS _s(\rr d)\neq \{ 0\} ,& \quad s&\ge \frac 12
\end{alignat*}
\end{rem}

\par

%From now on we let
%%%
%\begin{equation}\label{phidef}
%\phi (x)=\pi ^{-\frac d4}e^{-\frac {|x|^2}2}.
%\end{equation}
%%%

\par

%%% %
\subsection{Spaces of entire functions and
the Bargmann transform}\label{subsec1.4}
%%% %
Let $\Omega \subseteq \cc d$ be open and let $\Omega _0\subseteq \cc d$
be non-empty (but not necessary open). Then $A(\Omega )$ is the set of all
analytic functions in $\Omega$, and
$$
A(\Omega _0) = \bigcup A(\Omega ),
$$
where the union is taken over all open sets $\Omega \subseteq \cc d$
such that $\Omega _0\subseteq \Omega$. In particular, if $z_0\in \cc d$
is fixed, then $A(\{z_0\} )$ is the set of all complex-valued functions
which are defined and analytic near $z_0$.

\par

We shall now consider the Bargmann transform. We set
\begin{gather*}
\scal zw = \sum _{j=1}^dz_jw_j
\quad \text{and}\quad (z,w) = \scal z{\overline w}
,\quad \text{when}
\\[1ex]
z=(z_1,\dots ,z_d) \in \cc d
\quad \text{and} \quad
w=(w_1,\dots ,w_d)\in \cc d,
\end{gather*}
and otherwise $\scal \cdo \cdo $ denotes the duality between test
function spaces and their corresponding duals.
The Bargmann transform $\mathfrak V_df$ of $f\in L^2(\rr d)$
is defined by the formula
\begin{equation}\label{Eq:BargmTransf}
(\mathfrak V_df)(z) =\pi ^{-\frac d4}\int _{\rr d}\exp \Big ( -\frac 12(\scal
z z+|y|^2)+2^{\frac 12}\scal zy \Big )f(y)\, dy
\end{equation}
(cf. \cite{B1}). We note that if $f\in
L^2(\rr d)$, then the Bargmann transform
$\mathfrak V_df$ of $f$ is the entire function on $\cc d$, given by
$$
(\mathfrak V_df)(z) =\int _{\rr d}\mathfrak A_d(z,y)f(y)\, dy,
$$
or
\begin{equation}\label{bargdistrform}
(\mathfrak V_df)(z) =\scal f{\mathfrak A_d(z,\cdo )},
\end{equation}
where the Bargmann kernel $\mathfrak A_d$ is given by
$$
\mathfrak A_d(z,y)=\pi ^{-\frac d4} \exp \Big ( -\frac 12(\scal
zz+|y|^2)+2^{\frac 12}\scal zy\Big ).
$$
Evidently, the right-hand side in \eqref{bargdistrform} makes sense
when $f\in \maclS _{\frac 12}'(\rr d)$ and defines an element in $A(\cc d)$,
since $y\mapsto \mathfrak A_d(z,y)$ can be interpreted as an element
in $\maclS _{\frac 12} (\rr d)$ with values in $A(\cc d)$.

\par

It was proved in \cite{B1} that $f\mapsto \mathfrak V_df$ is a bijective
and isometric map  from $L^2(\rr d)$ to the Hilbert space $A^2(\cc d)
\equiv B^2(\cc d)\cap A(\cc d)$, where $B^2(\cc d)$ consists of all
measurable functions $F$ on $\cc  d$ such that
\begin{equation}\label{A2norm}
\nm F{B^2}\equiv \Big ( \int _{\cc d}|F(z)|^2d\mu (z)  \Big )^{\frac 12}<\infty .
\end{equation}
Here $d\mu (z)=\pi ^{-d} e^{-|z|^2}\, d\lambda (z)$, where $d\lambda (z)$ is the
Lebesgue measure on $\cc d$. We recall that $A^2(\cc d)$ and $B^2(\cc d)$
are Hilbert spaces, where the scalar product are given by
\begin{equation}\label{A2scalar}
(F,G)_{B^2}\equiv  \int _{\cc d} F(z)\overline {G(z)}\, d\mu (z),
\quad F,G\in B^2(\cc d).
\end{equation}
If $F,G\in A^2(\cc d)$, then we set $\nm F{A^2}=\nm F{B^2}$
and $(F,G)_{A^2}=(F,G)_{B^2}$.

\par

Furthermore, Bargmann showed that there is a convenient reproducing
formula on $A^2(\cc d)$. More precisely, let
\begin{equation}\label{reproducing}
(\Pi _AF)(z) \equiv \int _{\cc d}F(w)e^{(z,w)}\, d\mu (w),
\end{equation}
when $z\mapsto F(z)e^{R|z|-|z|^2}$ belongs to $L^1(\cc d)$
for every $R\ge 0$. Then it is proved in \cite{B1,B2} that
$\Pi _A$ is the orthogonal
projection of $B^2(\cc d)$ onto $A^2(\cc d)$. In particular,
$\Pi _AF =F$ when $F\in A^2(\cc d)$.

\medspace

In \cite{B1} it is also proved that
\begin{equation}\label{BargmannHermite}
\mathfrak V_dh_\alpha  = e_\alpha ,\quad \text{where}\quad
e_\alpha (z)\equiv \frac {z^\alpha}{\sqrt {\alpha !}},\quad z\in \cc d .
\end{equation}
In particular, the Bargmann transform maps the orthonormal basis
$\{ h_\alpha \}_{\alpha \in \nn d}$ in $L^2(\rr d)$ bijectively into the
orthonormal basis $\{ e_\alpha \}_{\alpha \in \nn d}$ of monomials
in $A^2(\cc d)$.

\par

For general $f\in \maclH _0'(\rr d)$ we now set
\begin{equation}\label{Eq:GeneralBargmannDef}
\mathfrak V_df \equiv (T_{\maclA}\circ T_{\maclH}^{-1})f,
\qquad f\in \maclH _0'(\rr d),
\end{equation}
where $T_{\maclH}$ and $T_{\maclA}$ are given by Remark
\ref{Rem:PilPowerSpaces}. It follows from \eqref{BargmannHermite}
that $\mathfrak V_df$ in \eqref{Eq:GeneralBargmannDef} agrees with
$\mathfrak V_df$ in \eqref{Eq:BargmTransf} when $f\in L^2(\rr d)$,
and that this is the only way to extend the Bargmann transform to the
space $\maclH _0'(\rr d)$. It follows that
$\mathfrak V_d=T_{\maclA}\circ T_{\maclH}^{-1}$
is a homeomorphism from $\maclH _0'(\rr d)$
to $\maclA _0'(\cc d)$, which restricts to homeomorphisms
from the spaces in \eqref{Eq:PilSpaces} to the spaces
in \eqref{Eq:PowerSpaces}, respectively.
If $f\in \maclH _0'(\rr d)$ and $F\in \maclA _0'(\cc d)$
are given by \eqref{Eq:HermiteExp} and \eqref{Eq:PowerSeriesExp}
with $c_h(f,\alpha )= c(F,\alpha )$ for all $\alpha \in \nn d$,
then it follows that $\mathfrak V_df =F$.

\par

It follows that if $f,g\in L^2(\rr d)$ and $F,G\in A^2(\cc d)$, then
\begin{equation}\label{Scalarproducts}
\begin{aligned}
(f,g)_{L^2(\rr d)} &= \sum _{\alpha \in \nn d}c_h(f,\alpha ) \overline {c_h(g,\alpha )},
\\[1ex]
(F,G)_{A^2(\cc d)} &= \sum _{\alpha \in \nn d}c(F,\alpha ) \overline {c(G,\alpha )}.
\end{aligned}
\end{equation}

By the definitions we get the following proposition on duality for Pilipovi{\'c}
spaces and their Bargmann images. The details are left
for the reader.

\par

%\begin{prop}
%Let $s_1\in \mathbf R_\flat$ and $s_2\in \overline{\mathbf R}_\flat$. Then the
%following is true:
%\begin{enumerate}
%\item the form $(\cdo ,\cdo )_{L^2(\rr d)}$ on
%$\maclH _0(\rr d)\times \maclH _0(\rr d)$ is uniquely extendable to
%sesqui-linear forms on
%%%
%\begin{alignat*}{2}
%&\maclH _{s_2}'(\rr d)\times \maclH _{s_2}(\rr d),
%&\quad
%&\maclH _{s_2}(\rr d)\times \maclH _{s_2}'(\rr d),
%\\[1ex]
%&\maclH _{0,s_1}'(\rr d)\times \maclH _{0,s_1}(\rr d)
%&\quad \text{and on}\quad
%&\maclH _{0,s_1}(\rr d)\times \maclH _{0,s_1}'(\rr d).
%\end{alignat*}
%%%
%The duals of $\maclH _{s_2}(\rr d)$ and $\maclH _{0,s_1}(\rr d)$
%are equal to $\maclH _{s_2}'(\rr d)$ and $\maclH _{0,s_1}'(\rr d)$,
%respectively, through the form $(\cdo ,\cdo )_{L^2(\rr d)}$;
%\end{enumerate}
%\end{prop}

\begin{prop}
Let $s_1\in \mathbf R_\flat$ and $s_2\in \overline{\mathbf R}_\flat$. Then 
the form $(\cdo ,\cdo )_{L^2(\rr d)}$ on
$\maclH _0(\rr d)\times \maclH _0(\rr d)$ is uniquely extendable to
sesqui-linear forms on
\begin{alignat*}{2}
&\maclH _{s_2}'(\rr d)\times \maclH _{s_2}(\rr d),
&\quad
&\maclH _{s_2}(\rr d)\times \maclH _{s_2}'(\rr d),
\\[1ex]
&\maclH _{0,s_1}'(\rr d)\times \maclH _{0,s_1}(\rr d)
&\quad \text{and on}\quad
&\maclH _{0,s_1}(\rr d)\times \maclH _{0,s_1}'(\rr d).
\end{alignat*}
The duals of $\maclH _{s_2}(\rr d)$ and $\maclH _{0,s_1}(\rr d)$
are equal to $\maclH _{s_2}'(\rr d)$ and $\maclH _{0,s_1}'(\rr d)$,
respectively, through the form $(\cdo ,\cdo )_{L^2(\rr d)}$.

\par

The same holds true if the spaces in \eqref{Eq:PilSpaces}
and the form $(\cdo ,\cdo )_{L^2(\rr d)}$ are replaced by
corresponding spaces in \eqref{Eq:PowerSpaces} and the form
$(\cdo ,\cdo )_{A^2(\cc d)}$, at each occurrence.
\end{prop}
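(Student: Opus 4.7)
The plan is to transport the claim to the underlying sequence spaces by means of the topological isomorphisms $T_\maclH$ and $T_\maclA$ from Remark \ref{Rem:PilPowerSpaces}, and then reduce everything to Remark \ref{Rem:SeqSpaces}. First I would note that for $f,g\in \maclH_0(\rr d)$, the Hermite expansion \eqref{Eq:HermiteExp} gives
\[
(f,g)_{L^2(\rr d)} \, = \, \sum_{\alpha \in \nn d} c_h(f,\alpha)\,\overline{c_h(g,\alpha)} \, = \, (a_f,a_g)_{\ell^2(\nn d)},
\]
where $a_f=\{c_h(f,\alpha)\}_{\alpha}$ and analogously for $g$; this is the first identity of \eqref{Scalarproducts}. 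Hence, under the bijection $T_\maclH$, the form $(\cdo,\cdo)_{L^2(\rr d)}$ on $\maclH_0(\rr d)\times \maclH_0(\rr d)$ is intertwined with the sesqui-linear $\ell^2$-pairing on $\ell_0(\nn d)\times \ell_0(\nn d)$.

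The next step is to apply Remark \ref{Rem:SeqSpaces}(3), which supplies unique continuous extensions of $(\cdo,\cdo)_{\ell^2(\nn d)}$ to each of the four products
\[
\ell_{0,s_1}'(\nn d)\times \ell_{0,s_1}(\nn d),\quad \ell_{0,s_1}(\nn d)\times \ell_{0,s_1}'(\nn d), \quad \ell_{s_2}'(\nn d)\times \ell_{s_2}(\nn d),\quad \ell_{s_2}(\nn d)\times \ell_{s_2}'(\nn d),
\]
and identifies the duals of $\ell_{0,s_1}(\nn d)$ and $\ell_{s_2}(\nn d)$ with $\ell_{0,s_1}'(\nn d)$ and $\ell_{s_2}'(\nn d)$ through these extensions. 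Uniqueness rests on the density of $\ell_0(\nn d)$ stated in Remark \ref{Rem:SeqSpaces}(2), which transports under $T_\maclH$ to density of $\maclH_0(\rr d)$ in every space of \eqref{Eq:PilSpaces}. Pulling the extension back through $T_\maclH$, which by construction intertwines the topologies of \eqref{Eq:SeqSpaces} with those of \eqref{Eq:PilSpaces}, yields the asserted extension of $(\cdo,\cdo)_{L^2(\rr d)}$ together with the duality statement on the Hermite side.

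For the second half of the proposition I would run the same argument with $T_\maclA$ in place of $T_\maclH$. The Bargmann transform $\mathfrak V_d=T_\maclA\circ T_\maclH^{-1}$ preserves the coefficient sequence by \eqref{BargmannHermite}, so the second line of \eqref{Scalarproducts} shows that under $T_\maclA$ the form $(\cdo,\cdo)_{A^2(\cc d)}$ on $\maclA_0(\cc d)\times \maclA_0(\cc d)$ again corresponds to the sesqui-linear $\ell^2$-pairing on $\ell_0(\nn d)\times \ell_0(\nn d)$. The remaining steps --- unique continuous extension, density, and identification of duals --- go through verbatim and produce the statement for the spaces in \eqref{Eq:PowerSpaces} paired by $(\cdo,\cdo)_{A^2(\cc d)}$. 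I do not anticipate any serious obstacle: the only points that require a little care are keeping the conjugate in the second slot of the pairing (already the convention used in Remark \ref{Rem:SeqSpaces}(3)) and matching the inductive/projective limit structure of the spaces in \eqref{Eq:SeqSpaces} with the appropriate projective/inductive limits on the dual side.
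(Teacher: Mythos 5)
Your argument is correct and is exactly what the paper intends: the proposition is stated with the remark that it follows ``by the definitions,'' and since the topologies on the spaces in \eqref{Eq:PilSpaces} and \eqref{Eq:PowerSpaces} are by construction transported from the sequence spaces \eqref{Eq:SeqSpaces} via $T_{\maclH}$ and $T_{\maclA}$, reducing to Remark \ref{Rem:SeqSpaces} (2)--(3) through \eqref{Scalarproducts} is precisely the omitted argument. No further comment is needed.
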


\par

If $s\in \overline {\mathbf R_\flat}$, $f\in \maclH _s(\rr d)$, $g\in \maclH _s'(\rr d)$,
$F \in \maclA _s(\cc d)$ and $G \in \maclA _s'(\cc d)$, then $(f,g)_{L^2(\rr d)}$
and $(F,G)_{A^2(\cc d)}$ are defined by the formula \eqref{Scalarproducts}.
It follows that
\begin{equation}\label{ScalarproductsRel}
c_h(f,\alpha ) = c(F,\alpha )
\quad \text{when}\quad
F=\mathfrak V_df ,\ G=\mathfrak V_dg .
\end{equation}
holds for such choices of $f$ and $g$.
Furthermore, the duals of $\maclH _s(\rr d)$
and $\maclA _s(\cc d)$ can be identified with $\maclH _s'(\rr d)$
and $\maclA _s'(\cc d)$, respectively, through the forms in \eqref{Scalarproducts}.
The same holds true with
\begin{alignat*}{5}
&\maclH _{0,s}, &\quad &\maclH _{0,s}' ,& \quad &\maclA _{0,s}, &\quad &\text{and}&\quad
&\maclA _{0,s}'
\intertext{in place of}
&\maclH _s, &\quad &\maclH _s' , &\quad &\maclA _s, &\quad &\text{and}&\quad
&\maclA _s',
\end{alignat*}
respectively, at each occurrence.

\par

\begin{rem}\label{Rem:AnalSpacesIdent}
In \cite{Toft15}, the spaces in \eqref{Eq:PowerSpaces}, contained in
$\maclA _{0,\flat _1}'(\cc d)$ are identified as follows as spaces
of analytic functions:
\begin{enumerate}
\item if $s\in [0,\frac 12 )$ and $\sigma >0$, then $\maclA _s(\cc d)$
($\maclA _{0,s}(\cc d)$) is equal to
\begin{align*}
&\sets {F\in A(\cc d)}{|F(z)|\lesssim e^{r(\log \eabs z)^{\frac 1{1-2s}}}
\ \text{for some (every) $r>0$}}
\end{align*}

\vrum

\item if $\sigma _1>0$ and $\sigma _2>1$, then
$\maclA _{\flat _{\sigma _1}}(\cc d)$
($\maclA _{0,\flat _{\sigma _1}}(\cc d)$) is equal to
\begin{align*}
&\sets {F\in A(\cc d)}{|F(z)|\lesssim e^{r|z|^{\frac {2\sigma _1}{\sigma _1 +1}} }
\ \text{for some (every) $r>0$}},
\intertext{and $\maclA _{\flat _{\sigma _2}}'(\cc d)$
($\maclA _{0,\flat _{\sigma _2}}'(\cc d)$) is equal to}
&\sets {F\in A(\cc d)}{|F(z)|\lesssim e^{r|z|^{\frac {2\sigma_2}{\sigma _2 -1}} }
\ \text{for every (some) $r>0$}}.
\end{align*}
Furthermore,  $\maclA _{\flat _1}'(\cc d) = A(\cc d)$,
and $\maclA _{0,\flat _1}'(\cc d) = A(\{ 0 \})$;

\vrum

\item if $s_0=\frac 12$, then $\maclA _{0,s_0}(\cc d)$ ($\maclA _{0,s_0}'(\cc d)$)
is equal to
\begin{align*}
\sets {F\in A(\cc d)}{|F(z)|\lesssim e^{r|z|^2}
\ \text{for every (some) $r>0$}};
\end{align*}

\vrum

\item if $s\ge \frac 12$ ($s> \frac 12$), then $\maclA _{s}(\cc d)$
($\maclA _{0,s}(\cc d)$)
is equal to
\begin{align*}
&\sets {F\in A(\cc d)}{|F(z)|\lesssim e^{\frac 12\cdot |z|^2 -r |z|^{\frac 1{s}}}
\ \text{for some (every) $r>0$} }
\intertext{and $\maclA _{s}'(\cc d)$ ($\maclA _{0,s}'(\cc d)$) is equal to}
%\maclA _{0,s_2}(\cc d)
%&=
%\sets {F\in A(\cc d)}{|F(z)|\lesssim e^{\frac 12\cdot |z|^2 -r |z|^{\frac 1{s_1}}}
%\ \text{for every $r>0$} },
%\\
&\sets {F\in A(\cc d)}{|F(z)|\lesssim e^{\frac 12\cdot |z|^2 +r |z|^{\frac 1{s}}}
\ \text{for every (some) $r>0$} } .
%\maclA _{0,s_2}'(\cc d)
%&=
%\sets {F\in A(\cc d)}{|F(z)|\lesssim e^{\frac 12\cdot |z|^2 +r |z|^{\frac 1{s_1}}} 
%\ \text{for some $r>0$} } \text ;
\end{align*}
\end{enumerate}
\end{rem}

\par

Additionally to Remark \ref{Rem:AnalSpacesIdent} we have
$$
A(\cc d) = \bigcap _{r\in \rr d_+} A(D_{d,r}(z))
\quad \text{and}\quad
A(\{ 0 \}) = \bigcup _{r\in \rr d_+} A(D_{d,r}(z)).
$$
Here and in what follows,
$D_{d,r}(z_0)$ is the (open) polydisc
$$
\sets {z=(z_1,\dots ,z_d)\in \cc d}{|z_j-z_{0,j}|< r_j,\ j=1,\dots ,d},
$$
with center and radii given by
$$
z_0 =(z_{0,1},\dots ,z_{0,d}) \in \cc d
\quad \text{and}\quad
r=(r_1,\dots ,r_d)\in \mathbf [0,\infty )^d.
$$

\par

%%%%
\subsection{A test function space introduced by Gr{\"o}chenig}
%%%%

\par

In this section we recall some comparison results deduced in \cite{Toft15},
between a test function space, $\maclS _C(\rr d)$, introduced by Gr{\"o}chenig
in \cite{Gc2} to handle modulation spaces with elements in
spaces of ultra-distributions.

\par

The definition of $\maclS _C(\rr d)$ is given as follows.

\par

\begin{defn}\label{Def:SCSG}
The space
$\maclS _C(\rr d)$ consists of all
$f\in \mascS '(\rr d)$ such that
\begin{equation}\label{Eq:fSTFTAdjDef}
f(x) = \iint _{\rr {2d}} F(y,\eta )
e^{-(\frac 12|x-y|^2+|y|^2+|\eta |^2)}
e^{i(\frac 12\scal y\eta -\scal x\eta )}\, dyd\eta ,
\end{equation}
for some $F\in L^\infty (\rr {2d})\cap \mascE '(\rr {2d})$.
\end{defn}
%
% Toeplitz operators:
% 
% $f$ above is given by
%
% $(16\pi ^5)^{d/4}\tp (F_0)\phi$ with
% $F_0(y,\eta )= F(y,-\eta )$
%

\par

Evidently, we could have included the factors $e^{-(|y|^2+|\eta |^2)}$ and
$e^{\frac i2\scal y\eta}$ in the function $F(y,\eta )$ in \eqref{Eq:fSTFTAdjDef}.
The following reformulation of \cite[Lemma 4.9]{Toft15} justifies the separation. 
The result is essential when deducing the characterizations of Pilipovi{\'c}
spaces in Section \ref{sec3}.

\par

\begin{lemma}\label{GrochSpaceBargm}
Let $F\in L^\infty (\cc d)\cup \mascE '(\cc d)$. Then the Bargmann transform
of $f$ in \eqref{Eq:fSTFTAdjDef}
is given by $\Pi _AF_0$, where
\begin{equation}\label{Eq:FtoF0}
F_0(x+i\xi ) = (8\pi ^5)^{\frac d4}F(\sqrt 2 x,\sqrt 2 \xi ).
\end{equation}
\end{lemma}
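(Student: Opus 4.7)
The plan is a direct calculation. First, substitute \eqref{Eq:fSTFTAdjDef} into the Bargmann formula \eqref{Eq:BargmTransf}. Since $F\in L^\infty(\cc d)\cap \mascE'(\cc d)$ is bounded with compact support, the combined integrand on $\rr{3d}$ is absolutely integrable; Fubini applies, and the $x$-integration can be performed first.

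For fixed $y,\eta$, the inner $x$-integral has an exponent quadratic in $x$ with coefficient $-1$ on $|x|^2$ and linear in $x$ with complex coefficient $w:=\sqrt 2\,z+y-i\eta \in \cc d$. The complex Gaussian identity
\[
\int_{\rr d}e^{-|x|^2+x\cdot w}\,dx = \pi^{d/2}\,e^{\scal ww/4},\qquad w\in\cc d,
\]
(obtained by shifting the contour in $x$) evaluates this integral. Expanding $\scal ww/4$ produces several terms, and the two crucial cancellations are: the $+\tfrac12\scal zz$ from $\scal ww/4$ cancels the $-\tfrac12\scal zz$ from the Bargmann kernel; and the imaginary cross-term $-\tfrac i2\scal y\eta$ from $\scal ww/4$ cancels the phase $e^{i\scal y\eta/2}$ built into \eqref{Eq:fSTFTAdjDef}. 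These cancellations, together with the Gaussian weight $e^{-(|y|^2+|\eta|^2)}$ in \eqref{Eq:fSTFTAdjDef}, are why Gr{\"o}chenig's formula is written in this shape.

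What remains is a product of $F(y,\eta)$, a real Gaussian in $(y,\eta)$, and $e^{\tfrac1{\sqrt2}\scal z{y-i\eta}}$, integrated against $dy\,d\eta$ and multiplied by a power of $\pi$. The substitution $(y,\eta)=(\sqrt 2\,u,\sqrt 2\,v)$ (Jacobian $2^d$) together with the identification $w=u+iv\in\cc d$ rewrites the linear exponent as $\scal z{\overline w}=(z,w)$ and turns the Gaussian into a multiple of $e^{-|w|^2}$. Collecting constants, the accumulated powers of $\pi$ and $2$ absorb into the weight $\pi^{-d}$ of $d\mu(w)$ together with the prefactor $(8\pi^5)^{d/4}$ of $F_0$ in \eqref{Eq:FtoF0}, so that comparison with \eqref{reproducing} identifies $\mathfrak V_df(z)$ with $(\Pi_AF_0)(z)$.

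The main obstacle is purely arithmetic bookkeeping — tracking every power of $\pi$ and $2$ through the Gaussian integration and the rescaling, and verifying that the two cancellations described above actually occur. No conceptual difficulty arises: the content of the lemma is the observation that \eqref{Eq:fSTFTAdjDef} is engineered precisely so that its Bargmann image is a clean Bargmann reproducing integral applied to $F_0$.
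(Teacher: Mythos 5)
The paper contains no proof of this lemma: it is imported as a reformulation of \cite[Lemma~4.9]{Toft15}, so there is no internal argument to compare with. Your route --- substitute \eqref{Eq:fSTFTAdjDef} into \eqref{Eq:BargmTransf}, apply Fubini, evaluate the Gaussian integral in $x$, and rescale --- is certainly the intended one, and the two cancellations you single out (of $\tfrac 12\scal zz$ against the Bargmann kernel and of $-\tfrac i2\scal y\eta$ against the phase in \eqref{Eq:fSTFTAdjDef}) do occur. One minor caveat: the hypothesis is $F\in L^\infty (\cc d)\cup \mascE '(\cc d)$, a union, so the clean Fubini argument covers only the case $F\in L^\infty \cap \mascE '$; for a general $F\in \mascE '(\cc d)$ you need a short additional remark (distributional pairing or regularization).

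The genuine gap is that you declare the remaining step ``purely arithmetic bookkeeping'' and then do not perform it, and with the formulas exactly as printed the bookkeeping does not close. Carrying out your own plan: the $x$-exponent is $-|x|^2+\scal xw$ with $w=\sqrt 2\, z+y-i\eta$, and $\tfrac 14\scal ww$ contributes $\tfrac 14|y|^2-\tfrac 14|\eta |^2$ beyond the two cancelling terms; combined with the $-\tfrac 12|y|^2$ coming from $-\tfrac 12|x-y|^2$ and with the factor $e^{-(|y|^2+|\eta |^2)}$ in \eqref{Eq:fSTFTAdjDef}, the residual real weight is $e^{-\frac 54(|y|^2+|\eta |^2)}$, whereas the substitution $(y,\eta )=(\sqrt 2\, u,\sqrt 2\, v)$ must see exactly $e^{-\frac 12(|y|^2+|\eta |^2)}$ in order to produce the weight $e^{-|w|^2}$ of \eqref{reproducing}; the constants likewise come out as $\pi ^{\frac d4}$ on one side against $\pi ^{-d}2^{-d}(8\pi ^5)^{\frac d4}=(\pi /2)^{\frac d4}$ on the other. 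So either \eqref{Eq:fSTFTAdjDef} carries a typo (e.g.\ $e^{-\frac 14(|y|^2+|\eta |^2)}$ intended) or the dilation and prefactor in \eqref{Eq:FtoF0} are not the ones your argument yields. Since the entire content of the lemma is this quantitative identification, asserting rather than verifying it --- when the asserted outcome conflicts with the printed normalizations --- leaves the proof incomplete: you must either exhibit the exponent calculation in full or reconcile it with the precise conventions of \cite[Lemma~4.9]{Toft15}.
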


\par

% Here recall that the map $\Pi _A$ is defined
% by \eqref{reproducing}.

% \par

The first part of the next result follows from \cite[Theorem 4.10]{Toft15}
and the last part of the result follows \cite[Theorem 2.2]{NaPfTo1}. The
proof is therefore omitted.

\par

\begin{prop}\label{Prop:GrochPilipIdentity}
The following is true:
\begin{enumerate}
\item $\maclS _C(\rr d) = \maclH _{\flat _1}(\rr d)$;

\vrum

\item the image of $L^\infty (\cc d)\bigcap \mascE '(\cc d)$ under the map
$\Pi _A$ equals $\maclA _{\flat _1}(\cc d)$.
\end{enumerate}
\end{prop}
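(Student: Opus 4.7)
The plan is to establish (2) first, then deduce (1) from (2) using Lemma \ref{GrochSpaceBargm} and the fact that $\mathfrak V_d$ restricts to a homeomorphism $\maclH _{\flat _1}(\rr d)\to \maclA _{\flat _1}(\cc d)$.

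For the inclusion $\subseteq$ in (2), given $F\in L^\infty (\cc d)\cap \mascE '(\cc d)$ with $\supp F\subseteq \{ w\, ;\, |w|\le R\}$, I would expand $\Pi _A F$ in the orthonormal basis $\{ e_\alpha \}$ of $A^2(\cc d)$. Since $\Pi _A$ is the orthogonal projection of $B^2$ onto $A^2$,
\begin{equation*}
c(\Pi _A F, \alpha )=(F,e_\alpha )_{B^2} = \pi ^{-d}\int _{\cc d}F(w)\frac {\overline w {}^\alpha }{\sqrt {\alpha !}}e^{-|w|^2}\, d\lambda (w),
\end{equation*}
and the bound $|w^\alpha |\le R^{|\alpha |}$ on $\supp F$ immediately yields $|c(\Pi _A F,\alpha )|\lesssim R^{|\alpha |}/\sqrt {\alpha !}$, which is precisely the defining estimate of $\maclA _{\flat _1}(\cc d)$.

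For the reverse inclusion, I would look for $F$ in the specific form $F=\chi _D G$, where $\chi _D$ is the characteristic function of a polydisc $D=D_{d,\mabfr}(0)$ and $G$ is analytic on a neighbourhood of $\overline D$ (and hence bounded on $D$). Polar coordinates in each variable separately show that $\int _D e_\beta \overline {e_\alpha }\, d\mu $ vanishes unless $\alpha =\beta$, so the map $G\mapsto \Pi _A(\chi _D G)$ is diagonal in $\{ e_\alpha \}$: if $G=\sum b_\alpha e_\alpha $ then
\begin{equation*}
\Pi _A(\chi _D G) = \sum _\alpha K_\alpha (\mabfr )\, b_\alpha \, e_\alpha ,\qquad K_\alpha (\mabfr )=\prod _{j=1}^d\frac {\gamma (\alpha _j+1,r_j^2)}{\alpha _j!},
\end{equation*}
with $\gamma $ the lower-incomplete gamma function. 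Thus $F_0=\sum c(F_0,\alpha )e_\alpha $ arises as $\Pi _AF$ for some such $F$ precisely when the candidate coefficients $b_\alpha :=c(F_0,\alpha )/K_\alpha (\mabfr )$ define an analytic function $G$ on a polydisc slightly larger than $\overline D$. Combining the $\flat _1$-decay $|c(F_0,\alpha )|\lesssim h^{|\alpha |}/\sqrt {\alpha !}$ with a Stirling-type lower bound for $\gamma (\alpha _j+1,r_j^2)/\alpha _j!$ (which tends to $1$ when $\alpha _j\ll r_j^2$ and decays controllably otherwise), one verifies that if every $r_j$ is taken sufficiently large relative to $h$, the $b_\alpha $ satisfy $|b_\alpha |\lesssim (r')^{-|\alpha |}\sqrt {\alpha !}$ for some $r'$ strictly greater than $\max _j r_j$, whence $G$ has the required analyticity and $F=\chi _D G\in L^\infty \cap \mascE '$.

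Given (2), part (1) follows at once: $f\in \maclS _C(\rr d)$ iff it admits a representation of the form \eqref{Eq:fSTFTAdjDef} with $F\in L^\infty (\rr {2d})\cap \mascE '(\rr {2d})$; by Lemma \ref{GrochSpaceBargm} and the bijection \eqref{Eq:FtoF0} of $L^\infty \cap \mascE '$ this is equivalent to $\mathfrak V_df=\Pi _AF_0$ for some $F_0\in L^\infty (\cc d)\cap \mascE '(\cc d)$; by (2) this in turn is equivalent to $\mathfrak V_df\in \maclA _{\flat _1}(\cc d)$; and by the Bargmann homeomorphism (Remark \ref{Rem:PilPowerSpaces} together with \eqref{Eq:GeneralBargmannDef}) this is equivalent to $f\in \maclH _{\flat _1}(\rr d)$. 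The main obstacle is the quantitative Stirling-type control of $K_\alpha (\mabfr )$ in the reverse direction of (2), where the polydisc must be chosen large enough to compensate for the factor $h^{|\alpha |}$ in the decay of $F_0$'s coefficients without destroying the analyticity of $G$ on a genuinely larger polydisc.
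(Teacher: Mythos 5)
Your argument is correct, but it cannot be compared line-by-line with the paper's, because the paper gives no proof: part (1) is quoted from \cite[Theorem 4.10]{Toft15} and part (2) from \cite[Theorem 2.2]{NaPfTo1}. What you have written is essentially a self-contained reconstruction of those cited results, and it is consistent with the machinery the paper develops later: your identity $\Pi _A(\chi _DG)=\sum K_\alpha (\mabfr )b_\alpha e_\alpha$ with $K_\alpha (\mabfr )=\prod _j\gamma (\alpha _j+1,r_j^2)/\alpha _j!$ is exactly the diagonalization of Lemma \ref{Lemma:HermiteGroch} specialized to $\nu =\chi _D$ (there $K_\alpha =\varsigma _\alpha \alpha !^{-1}$), and the surjectivity step recovers the refinement $F_0=\Pi _A(\chi \cdot F)$ with $F$ analytic near $D$ that the introduction attributes to \cite{NaPfTo1}. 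The forward inclusion via $c(\Pi _AF,\alpha )=(F,e_\alpha )_{B^2}$ and $|w^\alpha |\le R^{|\alpha |}$ is fine. The only place where your write-up is vaguer than it needs to be is the ``Stirling-type'' control of $K_\alpha$: no asymptotic analysis is required, since the elementary bound $\gamma (n+1,x)\ge e^{-x}\int _0^xt^n\, dt=e^{-x}x^{n+1}/(n+1)$ already gives $K_\alpha (\mabfr )\gtrsim \prod _j(r_j^2/2)^{\alpha _j}/\alpha _j!$ uniformly in $\alpha$, whence $|b_\alpha |\lesssim (2h/r^2)^{|\alpha |}\sqrt {\alpha !}$ and the required analyticity of $G$ on a polydisc of radius $r^2/(2h)>r$ as soon as $r>2h$. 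With that bound made explicit, the proof is complete; the deduction of (1) from (2) via Lemma \ref{GrochSpaceBargm} and the injectivity of $\mathfrak V_d$ is exactly as the paper intends.
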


\par

%Due to the image properties the spaces in Proposition
%\ref{Prop:GrochPilipIdentity} under the Bargmann trasnform,
%the next result is equivalent with the previous one.
%
%\par
%
%\begin{prop}\label{Prop:GrochPilipBargmIdentity}
%The image of $L^\infty (\cc d)\bigcap \mascE '(\cc d)$ under the map
%$\Pi _A$ equals $\maclA _{\flat _1}(\cc d)$. 
%\end{prop}
%
%\par

%%%%%%%%%%%%%%%%%%%%%%%%%%%%%%
\section{Paley-Wiener properties for
Bargmann-Pilipovi{\'c} spaces}\label{sec2}
%%%%%%%%%%%%%%%%%%%%%%%%%%%%%%

\par

In this section we consider spaces of compactly supported functions with interiors in
$\maclA _s(\cc d)$ or in $\maclA _s'(\cc d)$. We show that the images of
such functions under the reproducing kernel $\Pi _A$ are equal to
$\maclA _s(\cc d)$, for some other choice of
$s\le \flat _1$. In the first part we state the main results given in Theorems
\ref{Thm:MainResult1}--\ref{Thm:MainResult2}. They are straight-forward
consequences of Propositions \ref{Prop:MainResult1}, where more detailed
information concerning involved constants are given. Thereafter we deduce
results which are needed for their proofs. Depending of the choice of $s$,
there are several different situations for characterizing $\maclA _s(\cc d)$.
This gives rise to a quite large flora of main results, where each one takes
care of one situation.

\par

In order to present the main results, we need the following definition.

%\par
%
%First it is convenient to
%introduce classes of functions as in the following definition.

\par

\begin{defn}\label{Def:RClass}
Let $t_1,t_2\in \rr d_+$ be such that $t_1\le t_2$. Then the set
$\maclR ^{*}_{t_1,t_2} (\cc d)$ consists of all
non-zero positive Borel measures
$\nu$ on $\cc d$ such that the following is true:
\begin{enumerate}
\item $d\nu (z_1,\dots ,z_d)$ is radial symmetric in each variable $z_j$;

\vrum

\item the support of $\nu$ contains
\begin{align*}
& \sets {z=(z_1,\dots ,z_d)\in \cc d}{|z_j|= t_{1,j} \ \text{for every}\  j=1,\dots ,d}
\intertext{and is contained in}
& \sets {z=(z_1,\dots ,z_d)\in \cc d}{|z_j|\le t_{2,j} \ \text{for every}\  j=1,\dots ,d}.
\end{align*}
\end{enumerate}
The set of compactly supported, positive, bounded and radial symmetric measures
is given by
$$
\maclR ^{*} (\cc d) \equiv \bigcup _{t_1\le t_2\in \rr d_+}
\maclR ^{*} _{t_1,t_2}(\cc d).
$$
\end{defn}

\par

\begin{rem}\label{Rem:RClass1}
Let $t_1,t_2\in \rr d_+$ be such that $t_1\le t_2$, $p\in [1,\infty ]$, 
$\maclR ^p_{t_1,t_2} (\cc d)$ be the set of all non-negative
$F\in L^p(\cc d)$ such that (1) and (2) in Definition \ref{Def:RClass}
holds with $F$ in place of $\nu$ and let
$$
\maclR ^p (\cc d) \equiv \bigcup _{t_1\le t_2\in \rr d_+}
\maclR ^p _{t_1,t_2}(\cc d).
$$
Then it is clear that $\maclR ^p _{t_1,t_2}(\cc d)$ and $\maclR ^p (\cc d)$ 
decrease with $p$ and are contained in $\maclR ^{*} _{t_1,t_2}(\cc d)$ and
$\maclR ^{*} (\cc d)$, respectively. In particular, these sets contain
$\maclR ^\infty _{t_1,t_2}(\cc d)$ and $\maclR ^\infty (\cc d)$, respectively,
in \cite{NaPfTo1}.
\end{rem}

\par

\begin{rem}\label{Rem:RClass2}
It is clear that the sets in Definition \ref{Def:RClass}
and Remark \ref{Rem:RClass1} are invariant
under multiplications with positive measurable, locally
bounded functions on $\cc d$ which are radial symmetric in each complex
variable $z_j$ in $z=(z_1,\dots ,z_d)\in \cc d$. In particular, they are
invariant under multiplications with $e^{t|z|^2}$ for every $t\in \mathbf R$.
\end{rem}

\par

\begin{rem}\label{Rem:RClass3}
Let $t_1,t_2\in \rr d_+$ be such that $t_1\le t_2$ and
$\nu \in \maclR ^{*} _{t_1,t_2}$. By Riesz representation theorem
it follows that
$$
d\nu (z) = d\theta d\nu _0(r),
$$
where
\begin{alignat*}{2}
z_j &=r_je^{i\theta _j}, &
\qquad
\theta
&=
(\theta _1,\dots ,\theta _d)\in [0,2\pi )^d,
\\[1ex]
r &=(r_1,\dots ,r_d)\in \overline {\mathbf R}_+^{\, d},&
\qquad
z
&=
(z_1,\dots ,z_d)\in \cc d
\end{alignat*}
and some non-zero positive Borel measures
$\nu _0$ on $\overline {\mathbf R}_+^{\, d}$ such that
the support of $\nu _0$ contains $t_1$ and is contained in
$$
\sets {r\in \overline {\mathbf R}_+^{\, d}}{r\le t_{2}}.
$$
\end{rem}

\par

\subsection{Main results}

\par

Our main investigations concern mapping properties of
operators of the form
\begin{equation}\label{Eq:MainMap}
F\mapsto \Pi _A(F\cdot \nu )
\end{equation}
when acting on the spaces given in Definition \ref{Def:PilPowerSpaces} (2).

\par

Before stating the main results we need the following
lemmas, which explain some properties of the map \eqref{Eq:MainMap}
when acting on the monomials $e_\alpha (z)$.

\par

\begin{lemma}\label{Lemma:HermiteGroch}
Let $t_1,t_2\in \rr d_+$ be such that $t_1\le t_2$, $\nu \in
\mathcal R^* _{t_1,t_2}(\cc d)$ and let $\nu _0$ be the same as in
Remark \ref{Rem:RClass3}. Then
\begin{align}
\Pi _A(e_\alpha \cdot  \nu ) &= \varsigma _\alpha \alpha !^{-1}e_\alpha ,
\qquad \alpha \in \nn d,
\label{Eq:BasicMap}
\intertext{where}
\varsigma _\alpha &= 2^d  \int  e^{-|r|^2}r^{2\alpha}\, d\nu _0(r)
\label{Eq:varsigmaDef}
\end{align}
satisfies
\begin{equation}\label{Eq:Estvarsigma}
t_1^{2\alpha}e^{-|t_2|^2}\lesssim \varsigma _\alpha \lesssim t_2^{2\alpha},
\qquad \alpha \in \nn d.
\end{equation}
\end{lemma}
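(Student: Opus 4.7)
The plan is to exploit the radial symmetry of $\nu$ so that the integral defining $\Pi_A(e_\alpha \cdot \nu)$ collapses, via orthogonality of the monomials $e_\beta$ over tori, to a one-dimensional radial integral. Unwinding the definitions with $d\mu(w) = \pi^{-d} e^{-|w|^2}\, d\lambda(w)$ and using the polar decomposition $d\nu(w) = d\theta\, d\nu_0(r)$ from Remark \ref{Rem:RClass3}, I start from
$$
\Pi_A(e_\alpha \cdot \nu)(z) = \pi^{-d} \int_{\cc d} e_\alpha(w)\, e^{(z,w)}\, e^{-|w|^2}\, d\nu(w).
$$

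Next I expand the reproducing kernel as $e^{(z,w)} = \sum_\beta e_\beta(z) \overline{e_\beta(w)}$; the compactness of $\supp \nu$ and absolute convergence of the series on compacts legitimize swapping sum and integral. In polar coordinates $e_\alpha(w)\overline{e_\beta(w)} = (\alpha!\beta!)^{-1/2} r^{\alpha+\beta} e^{i(\alpha-\beta)\cdot \theta}$, and the angular integration
$$
\int_{[0,2\pi)^d} e^{i(\alpha-\beta)\cdot \theta}\, d\theta = (2\pi)^d\, \delta_{\alpha,\beta}
$$
kills every term with $\beta \neq \alpha$. What remains is
$$
\Pi_A(e_\alpha \cdot \nu)(z) = 2^d\, \alpha!^{-1}\, e_\alpha(z) \int e^{-|r|^2}\, r^{2\alpha}\, d\nu_0(r) = \varsigma_\alpha\, \alpha!^{-1}\, e_\alpha(z),
$$
which is exactly \eqref{Eq:BasicMap} with $\varsigma_\alpha$ as in \eqref{Eq:varsigmaDef}.

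The bounds in \eqref{Eq:Estvarsigma} then follow by combining the support constraints on $\nu_0$. For the upper bound, $r\le t_2$ on $\supp\nu_0$ yields $r^{2\alpha} \le t_2^{2\alpha}$ and $e^{-|r|^2}\le 1$, so $\varsigma_\alpha \le 2^d\, \nu_0([0,t_2])\, t_2^{2\alpha}$ with the prefactor finite and independent of $\alpha$. For the lower bound, the pointwise inequality $e^{-|r|^2}\ge e^{-|t_2|^2}$ on the support reduces matters to a lower bound of $\int r^{2\alpha}\,d\nu_0(r)$ by a multiple of $t_1^{2\alpha}$. Here one uses that $t_1\in\supp\nu_0$ and that $t_1\in\rr d_+$ has strictly positive components to locate a small set of positive $\nu_0$-mass near $t_1$ on which $r^{2\alpha}$ is comparable to $t_1^{2\alpha}$.

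The formula \eqref{Eq:BasicMap} itself is essentially a routine consequence of radial symmetry together with the reproducing-kernel series, and presents no obstacle. The genuinely subtle point is the lower bound on $\varsigma_\alpha$: since the hypothesis $t_1\in\supp\nu_0$ only asserts that every neighbourhood of $t_1$ carries positive $\nu_0$-mass (rather than, say, a concrete atomic mass at $t_1$), one must be careful that the factor $(t_{1,j}-\epsilon)^{2\alpha_j}$ arising on a small box around $t_1$ absorbs into $t_1^{2\alpha}$ within the $\lesssim$-tolerance used throughout the Pilipovi\'c-space setting.
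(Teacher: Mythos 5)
Your derivation of \eqref{Eq:BasicMap} is essentially the paper's own argument: the paper also passes to polar coordinates in each $w_j$, expands the kernel (termwise Taylor expansion of $e^{z_jr_je^{-i\theta_j}}$, which is the same series as your $\sum_\beta e_\beta(z)\overline{e_\beta(w)}$), and lets the angular integral kill all off-diagonal terms; your justification of the interchange by absolute convergence on the compact support of $\nu$ matches the paper's appeal to uniform convergence. The upper bound in \eqref{Eq:Estvarsigma} is also fine.

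The lower bound, however, contains a genuine gap, and it is exactly at the point you flag in your last paragraph. If you extract mass $c_\epsilon=\nu _0(Q_\epsilon )>0$ from a box $Q_\epsilon$ of side $2\epsilon$ around $t_1$, the best pointwise bound available there is $r^{2\alpha}\ge \prod _j(t_{1,j}-\epsilon )^{2\alpha _j}$, and
$$
\prod _j(t_{1,j}-\epsilon )^{2\alpha _j}
=
t_1^{2\alpha}\prod _j\Bigl (1-\tfrac {\epsilon}{t_{1,j}}\Bigr )^{2\alpha _j},
$$
where the second factor tends to $0$ geometrically as $|\alpha |\to \infty$ for any fixed $\epsilon >0$; letting $\epsilon$ shrink with $\alpha$ does not help because $c_\epsilon$ may shrink arbitrarily fast. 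So no $\alpha$-independent constant comes out, and the loss does \emph{not} ``absorb into the $\lesssim$-tolerance''. In fact the estimate cannot be salvaged from the hypothesis $t_1\in \supp \nu _0$ alone: for $d=1$, $t_1=t_2=1$, the measure $\nu _0=\sum _{n\ge 1}2^{-n}\delta _{1-1/n}$ gives a $\nu \in \maclR ^*_{1,1}(\mathbf C)$ (its support is $\ttbigcup _n\{ |z|=1-1/n\} \cup \{ |z|=1\}$, which contains the unit circle), yet $\varsigma _\alpha \le 2\sum _n 2^{-n}(1-1/n)^{2\alpha}\to 0$ while $t_1^{2\alpha}e^{-|t_2|^2}=e^{-1}$ stays bounded below. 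What actually makes the lower bound work is the stronger property $\nu _0\bigl (\sets r{r_j\ge t_{1,j}\ \text{for all}\ j}\bigr )>0$, which immediately yields $\int r^{2\alpha}\, d\nu _0(r)\ge c\, t_1^{2\alpha}$ and holds for the intended examples (surface measure on the torus $|z_j|=t_{1,j}$, characteristic functions of polydiscs or annuli with $t_1$ strictly inside). The paper gives you no help here --- it dismisses \eqref{Eq:Estvarsigma} as a ``straight-forward consequence of the support properties'' and leaves the details to the reader --- so you have in fact located a point where the stated hypotheses need to be read more restrictively; but as written your argument does not prove the claimed lower bound.
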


\par

\begin{proof}
By using polar coordinates
in each complex variable when integrating we get
\begin{multline}\label{Eq:PiFalpha}
(\Pi _A(e_\alpha \cdot  \nu ))(z) = \pi ^{-d}
\alpha !^{-\frac 12}\int _{\cc d}w^\alpha e^{(z,w)-|w|^2}\, d\nu (w)
\\[1ex]
=
\pi ^{-d} \alpha !^{-\frac 12}
\int _{\Delta _{t_2}} I _\alpha (r,z)r^\alpha e^{-|r|^2}\, d\nu _0(r),
\end{multline}
where
\begin{align}
I _\alpha (r,z)
&=
\int _{[0,2\pi )^d}e^{i\scal \alpha \theta}
\left (
\prod _{j=1}^d
e^{z_jr_je^{-i\theta _j}}
\right )
\, d\theta
=
\prod _{j=1}^d I_{\alpha _j}(r_j,z_j)
\label{Eq:IalphaDef}
\intertext{with}
I_{\alpha _j}(r_j,z_j)
&=
\int _0^{2\pi}e^{i\alpha _j\theta _j}e^{z_jr_je^{-i\theta _j}}\, d\theta _j.
\notag
\end{align}
By Taylor expansions we get
\begin{multline*}
I_{\alpha _j}(r_j,z_j)
=
\int _0^{2\pi}e^{i\alpha _j\theta _j}
\left (
\sum _{k=0}^\infty
\frac {z_j^kr_j^ke^{-ik\theta _j}}{k!}
\right )
\, d\theta _j
\\[1ex]
=
\sum _{k=0}^\infty
\left (
\left (
\int _0^{2\pi}e^{i(\alpha _j-k)\theta _j}
\, d\theta _j 
\right )
\frac {z_j^kr_j^k}{k!}
\right )
=
\frac {2\pi z_j^{\alpha _j}r_j^{\alpha _j}}{\alpha _j!},
\end{multline*}
where the second equality is justified by
$$
\sum _{k=0}^\infty
\left (
\int _0^{2\pi}|e^{i(\alpha _j-k)\theta _j}|
\, d\theta _j 
\left |
\frac {z_j^kr_j^k}{k!}
\right |
\right ) = e^{2\pi (|z_1r_1| + \cdots + |z_dr_d|)} <\infty 
$$
and Weierstrass' theorem.

\par

By inserting this into \eqref{Eq:PiFalpha} and \eqref{Eq:IalphaDef} we get
\begin{multline*}
(\Pi _A(e_\alpha \cdot  \nu ))(z) = \pi ^{-d}\alpha !^{-\frac 12}
\int _{\Delta _{t_2}} 
(2\pi )^d\frac {r^\alpha z^\alpha}{\alpha !}
r^\alpha e^{-|r|^2}\, d\nu _0(r)
\\[1ex]
=
\varsigma _\alpha \alpha !^{-1}e_\alpha (z),
\end{multline*}
and \eqref{Eq:BasicMap} follows.

\par

The estimates in \eqref{Eq:Estvarsigma} are straight-forward consequences
of \eqref{Eq:varsigmaDef} and the support properties of $\nu _0$. The details
are left for the reader.
\end{proof}

\par

By replacing $\nu$ in the previous lemma with suitable
radial symmetric compactly supported distributions we get the following.

\par

\begin{lemma}\label{Lemma:HermiteGroch2}
Let $s>1$, $t_1,t_2\in \rr d_+$ be such that $t_1\le t_2$, $\nu (z)\in
\maclE _s'(\cc d)$ be radial symmetric in each $z_j$
such that
$$
\supp \nu
\subseteq 
\sets {z\in \cc d}{ t_{1,j}\le |z_j|\le t_{2,j}}
$$
Then \eqref{Eq:BasicMap} holds with
\begin{equation}\label{Eq:varsigmaDef2}
\varsigma _\alpha = 2^d  \alpha !^{-1}\scal {\nu _0}{\phi _\alpha},\quad
\phi _\alpha (r) = e^{-|r|^2}r^{2\alpha}r_1\cdots r_d,\ r\in \rr d_+,\ \alpha \in \nn d
\end{equation}
for some $\nu _0\in \maclE _s'(\rr d_+)$ with
$$
\supp \nu _0\subseteq \sets {r\in \rr d_+}{t_{1,j}\le r_j\le t_{2,j}\ \text{for every}\ j}.
$$
Furthermore,
\begin{equation}\label{Eq:Estvarsigma2}
\varsigma _\alpha \lesssim t_2^{2\alpha}.
\end{equation}
\end{lemma}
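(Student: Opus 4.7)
The strategy is to mimic the proof of Lemma \ref{Lemma:HermiteGroch}, replacing the integration against the measure $\nu$ by the distributional pairing with the compactly supported ultradistribution $\nu$. I proceed in three stages.

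\emph{Stage 1 (radial decomposition).} I first reduce $\nu \in \maclE _s'(\cc d)$ to an ultradistribution $\nu _0\in \maclE _s'(\rr d_+)$. Since $\supp \nu$ is bounded away from the coordinate hyperplanes $z_j=0$ (by the lower bound $|z_j|\ge t_{1,j}>0$), polar coordinates $z_j=r_je^{i\theta _j}$ have no singularity on $\supp \nu$. Using the radial symmetry of $\nu$ in each $z_j$, the prescription
\begin{equation*}
\scal {\nu _0}{\psi} := (2\pi )^{-d}\scal {\nu (z)}{\psi (|z_1|,\dots ,|z_d|)}, \qquad \psi \in \maclE _s(\rr d_+),
\end{equation*}
(the radial extension of $\psi$ smoothly cut off near $z_j=0$, which is irrelevant for $\nu$) defines an element $\nu _0$ with support in the asserted box $t_{1,j}\le r_j\le t_{2,j}$.

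\emph{Stage 2 (the spectral computation).} As in Lemma \ref{Lemma:HermiteGroch},
\begin{equation*}
\Pi _A(e_\alpha \cdot \nu )(z) = \pi ^{-d}\alpha !^{-\frac 12}
\scal {\nu (w)}{w^\alpha e^{(z,w)-|w|^2}}
\end{equation*}
is well defined because the test function $w\mapsto w^\alpha e^{(z,w)-|w|^2}$ lies in $\maclE _s(\cc d)$ when $s>1$. I would replace this test function by its angular average in each $w_j$ (legitimate by radial symmetry of $\nu$), expand $e^{(z,w)}=\prod _j \sum _{k_j} (z_j\bar w_j)^{k_j}/k_j!$ into a series convergent in $\maclE _s$, and carry out the angular integrals $\int _0^{2\pi}e^{i(\alpha _j-k_j)\theta _j}\, d\theta _j$ exactly as in Lemma \ref{Lemma:HermiteGroch}. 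Only the diagonal contribution $k=\alpha$ survives, and combined with the polar Jacobian $r_1\cdots r_d$ this produces \eqref{Eq:BasicMap} with $\varsigma _\alpha$ given by \eqref{Eq:varsigmaDef2}.

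\emph{Stage 3 (the estimate \eqref{Eq:Estvarsigma2}).} Continuity of $\nu _0\in \maclE _s'(\rr d_+)$ furnishes $h>0$ and a compact neighborhood $K$ of $\supp \nu _0$ (with $r_j\le t_{2,j}+\varepsilon$ on $K$) such that
\begin{equation*}
|\scal {\nu _0}{\phi _\alpha}|\lesssim \sup _{\beta \in \nn d}
\frac {\nm {\partial ^\beta \phi _\alpha}{L^\infty (K)}}{h^{|\beta |}\beta !^s}.
\end{equation*}
Since $\phi _\alpha (r)=\prod _j r_j^{2\alpha _j+1}e^{-r_j^2}$ is a polynomial of total degree $2|\alpha |+d$ times a Gaussian, a Leibniz expansion together with the standard bounds $|\partial _{r_j}^n e^{-r_j^2}|\lesssim C^n\sqrt{n!}$ for the Hermite-type derivatives of the Gaussian yields $|\scal {\nu _0}{\phi _\alpha}|\lesssim \alpha !\, t_2^{2\alpha}$. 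The prefactor $\alpha !^{-1}$ in \eqref{Eq:varsigmaDef2} then gives \eqref{Eq:Estvarsigma2}.

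The main obstacle is Stage 3: the Gevrey-$s$ seminorm for $s>1$ a priori permits factorials of order $s$, which would be too weak to extract the polynomial estimate $t_2^{2\alpha}$. The saving point is that derivatives of $r^{2\alpha +\mathbf 1}$ only reach order $2|\alpha |+d$, so the $\alpha$-dependence is carried by factorials of \emph{order one} from the polynomial factor, while the tail of $\beta !^s$ only affects the $\alpha$-independent constants. A careful bookkeeping of the Leibniz expansion against the Gevrey seminorm is thus the technical heart of the proof.
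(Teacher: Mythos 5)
Your Stages 1 and 2 follow the paper's own proof: the paper likewise passes to polar coordinates via the pull-back formula \cite[Theorem 6.1.2]{Ho1} and Fubini's theorem for ultra-distributions, writes $(\Pi _A(e_\alpha \cdot \nu ))(z)=\pi ^{-d}\alpha !^{-\frac 12}\scal {\nu _0\otimes 1_{[0,2\pi )^d}}{\Psi}$ for some $\nu _0\in \maclE _s'(\rr d_+)$, and evaluates the angular pairing exactly as in Lemma \ref{Lemma:HermiteGroch}, which is what produces $\phi _\alpha$ and \eqref{Eq:varsigmaDef2}. The divergence, and the genuine gap, is in Stage 3. You defer ``a careful bookkeeping of the Leibniz expansion'' as ``the technical heart of the proof'', so \eqref{Eq:Estvarsigma2} is not actually established; worse, the heuristic you offer for why the bookkeeping closes is off target. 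The claim that ``the $\alpha$-dependence is carried by factorials of order one from the polynomial factor'' is not what happens: when $|\beta |$ is comparable to $2|\alpha |$, differentiating $r^{2\alpha}r_1\cdots r_d$ contributes $\frac{(2\alpha )!}{(2\alpha -\beta )!}$, which at $\beta =2\alpha$ equals $(2\alpha )!\sim 4^{|\alpha |}(\alpha !)^2$ --- factorials of order two. What rescues the estimate is playing this quotient against the Gevrey denominator, e.g. $\frac{(2\alpha )!}{(2\alpha -\beta )!\,\beta !^{s}}=\binom{2\alpha}{\beta}\beta !^{1-s}\le 4^{|\alpha |}$ for $s\ge 1$, or more sharply $\sup _\beta (2\alpha /(ch))^{\beta}\beta !^{-s}\le e^{s(2\alpha /(ch))^{1/s}}=e^{o(|\alpha |)}$ for $s>1$; either way one obtains $|\scal {\nu _0}{\phi _\alpha}|\lesssim C^{|\alpha |}(t_2+\ep )^{2\alpha}$, which your $\alpha !^{-1}$ prefactor then comfortably upgrades to \eqref{Eq:Estvarsigma2}. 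Until some such computation is written out, Stage 3 is a plan rather than a proof.

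For comparison, the paper dispenses with the Leibniz bookkeeping entirely: it observes that the family $t_2^{-2\alpha}\phi _\alpha$, $\alpha \in \nn d$, is a bounded subset of $\maclE _s(\rr d_+)$ on a neighbourhood of $\supp \nu _0$, so a single application of the continuity of $\nu _0$ yields $|\scal {\nu _0}{\phi _\alpha}|\lesssim t_2^{2\alpha}$ uniformly in $\alpha$ --- a stronger intermediate bound than your target $\alpha !\, t_2^{2\alpha}$, and one that does not lean on the $\alpha !^{-1}$ in \eqref{Eq:varsigmaDef2}. Verifying that boundedness amounts to the same $\sup _\beta$ estimate sketched above, but applied once to the explicit functions $\phi _\alpha$ rather than threaded through the pairing. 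If you complete Stage 3 either way --- by proving the bounded-family statement or by finishing your Leibniz estimate with the corrected combinatorics --- the lemma follows.
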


\par

\begin{proof}
By using polar coordinates in each complex variable, the pull-back formula
\cite[Theorem 6.1.2]{Ho1} and Fubbini's
theorem for distributions and ultra-distributions, we get
\begin{multline}\label{Eq:PiFalpha2}
(\Pi _A(e_\alpha \cdot  \nu ))(z) = \pi ^{-d}\scal \nu{e^{(z,\cdo )-|\cdo |^2}e_\alpha}
\\[1ex]
=\pi ^{-d}\alpha !^{-\frac 12}\scal {\nu _0\otimes 1_{[0,2\pi )^d}}\Psi ,
\end{multline}
for some $\nu _0\in \maclE _s'(\rr d_+)$, where
$$
\Psi (r,\theta ) = e^{-|r|^2}\prod _{j=1}^d
\left (
e^{z_jr_je^{-i\theta _j}}r_j^{\alpha _j}e^{i\alpha _j\theta _j}r_j
\right ).
$$
By the same arguments as in the proof of Lemma \ref{Lemma:HermiteGroch}
we get
$$
\scal {1_{[0,2\pi )^d}}{\Psi (r,\cdo )} = (2\pi )^d \alpha !^{-1}
e^{-|r|^2}r^{2\alpha}r_1\cdots r_d\cdot z^\alpha 
=
(2\pi )^d \alpha !^{-\frac 12}\phi _\alpha (r)e_\alpha (z),
$$
and \eqref{Eq:BasicMap} follows with $\varsigma _\alpha$ given by
\eqref{Eq:varsigmaDef2}, by combining the latter identity with
\eqref{Eq:PiFalpha2}.

\par

The support assertions for $\nu _0$ follow from the support properties of $\nu$,
and the estimate \eqref{Eq:Estvarsigma2} follows from the fact that
$t_2^{-2\alpha}\phi _\alpha$ is a bounded set in $\maclE _s(\rr d_+)$ with respect
to $\alpha$ in the support of $\nu _0$. This gives the result.
\end{proof}

\par

Due to Lemma \ref{Lemma:HermiteGroch2} we let 
$\maclE _{RS}'(\cc d)$ be the set of all
$$
\nu (z)\in \bigcup _{s>1}\maclE _s'(\cc d)
$$
which are radial symmetric in each $z_j$ and such that
$0\notin \supp (\nu )$.

\par

\begin{thm}\label{Thm:MainResult1}
Let $s_1\in (0,\frac 12)$, $s_2\in [0,\frac 12)$ and
$\nu \in \maclR ^*(\cc d)$ ($\nu \in \maclE _{RS}'(\cc d)$).
Then the map \eqref{Eq:MainMap} from $\maclA _0(\cc d)$ to
$A(\cc d)$ is uniquely extendable to homeomorphisms (continuous
mappings) on
$$
\maclA _{s_2}(\cc d),
\quad
\maclA _{0,s_1}(\cc d),
\quad
\maclA _{0,s_1}'(\cc d)
\quad \text{and on}\quad
\maclA _{s_2}'(\cc d).
$$
\end{thm}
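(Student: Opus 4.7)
The plan is to reduce the claim to a coefficient multiplier statement on the sequence spaces that parametrise the four analytic function spaces. By Lemmas \ref{Lemma:HermiteGroch} and \ref{Lemma:HermiteGroch2}, for any monomial $e_\alpha$ one has $\Pi_A(e_\alpha\cdot\nu)=m_\alpha e_\alpha$ with $m_\alpha=\varsigma_\alpha/\alpha!$. Consequently, for $F=\sum_\alpha c(F,\alpha)e_\alpha\in\maclA_0(\cc d)$ linearity gives
$$
\Pi_A(F\cdot\nu)=\sum_{\alpha\in\nn d}m_\alpha c(F,\alpha)\,e_\alpha,
$$
so that in terms of the isomorphism $T_{\maclA}$ of Remark \ref{Rem:PilPowerSpaces}, the operator \eqref{Eq:MainMap} is conjugate to the diagonal multiplier $M_m:\{a(\alpha)\}\mapsto\{m_\alpha a(\alpha)\}$ on $\ell_0(\nn d)$. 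Since $\maclA_0(\cc d)$ is dense in $\maclA_{s_2}$, $\maclA_{0,s_1}$, $\maclA_{0,s_1}'$ and $\maclA_{s_2}'$ (Remark \ref{Rem:SeqSpaces}(2) carried over by Remark \ref{Rem:PilPowerSpaces}), any continuous extension of $T_\chi$ to these spaces must be given by $M_m$ on coefficients. The problem therefore reduces to proving that $M_m$ is continuous (respectively, a homeomorphism) on
$\ell_{s_2}(\nn d)$, $\ell_{0,s_1}(\nn d)$, $\ell_{0,s_1}'(\nn d)$ and $\ell_{s_2}'(\nn d)$.

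To prove continuity I would exploit the Stirling asymptotic $\log\alpha!\asymp|\alpha|\log|\alpha|-|\alpha|$ together with the upper bound $\varsigma_\alpha\lesssim t_2^{2\alpha}$ (valid in both the $\maclR^*$-case via \eqref{Eq:Estvarsigma} and the $\maclE_{RS}'$-case via \eqref{Eq:Estvarsigma2}) to deduce
$$
\log m_\alpha\le C|\alpha|-c|\alpha|\log|\alpha|,
$$
so that $m_\alpha$ is bounded on $\nn d$. Because the defining conditions of all four sequence spaces with $s_1,s_2<\tfrac12$ are stable under multiplication by any bounded sequence, continuity of $M_m$ on each of them is immediate, and it yields the continuous extension claimed in both the $\maclR^*$ and $\maclE_{RS}'$ cases.

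For bijectivity in the $\maclR^*$-case, I would turn to the lower bound $\varsigma_\alpha\gtrsim t_1^{2\alpha}e^{-|t_2|^2}$ from \eqref{Eq:Estvarsigma}, which together with Stirling gives
$$
\log(m_\alpha^{-1})\le|\alpha|\log|\alpha|+C|\alpha|+C'.
$$
The decisive comparison is that, for $s<\tfrac12$, the exponent satisfies $\tfrac1{2s}>1$, so that $|\alpha|\log|\alpha|=o\bigl(|\alpha|^{1/(2s)}\bigr)$. Consequently $m_\alpha^{-1}\lesssim e^{r|\alpha|^{1/(2s)}}$ for every $r>0$, which is more than enough to ensure that $M_{1/m}$ also acts continuously on each of the four sequence spaces. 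This makes $M_m$ bijective with continuous inverse, hence a homeomorphism. Pulling back through $T_{\maclA}$ gives the asserted homeomorphism on the analytic side.

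The main obstacle is this last step: certifying that $m_\alpha^{-1}$ is absorbed by the tolerance $e^{r|\alpha|^{1/(2s)}}$ of the Pilipovi{\'c}-type spaces. It is precisely the assumption $s<\tfrac12$ which forces $\tfrac1{2s}>1$ and so lets the super-exponential Stirling factor $\alpha!$ be dominated by the defining weight; for $s\ge\tfrac12$ this comparison would fail and the inverse multiplier would blow up relative to the admissible growth. In the $\maclE_{RS}'$-case, Lemma \ref{Lemma:HermiteGroch2} supplies no lower bound on $\varsigma_\alpha$, which is why the statement restricts to continuity (not homeomorphism) in that setting; this is structurally the only asymmetry between the two cases.
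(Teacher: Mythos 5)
Your argument is correct and is essentially the paper's own proof: the paper likewise reduces \eqref{Eq:MainMap} to the diagonal coefficient multiplier $c(F,\alpha )\mapsto \varsigma _\alpha \alpha !^{-1}c(F,\alpha )$ (Proposition \ref{Prop:MainResult1}) and bounds this multiplier and its reciprocal by $e^{r|\alpha |^{1/(2s)}}$ for every $r>0$, using $t_1^{2\alpha}\lesssim \varsigma _\alpha \lesssim t_2^{2\alpha}$ together with the fact that $s<\frac 12$ makes $\alpha !$ subordinate to these weights. The only point worth adding is the endpoint $s_2=0$, where the weight $e^{r|\alpha |^{1/(2s)}}$ is not defined but the spaces in question are $\maclA _0(\cc d)$ and $\maclA _0'(\cc d)$, on which any diagonal map with bounded non-vanishing symbol is trivially a homeomorphism.
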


\par

\begin{thm}\label{Thm:MainResult2}
Let $\sigma ,\sigma _0\in \mathbf R_+$ and $\nu \in \maclR ^*(\cc d)$
($\nu \in \maclE _{RS}'(\cc d)$). Then the following is true:
\begin{enumerate}
\item If $\sigma _0=\frac \sigma {2\sigma +1}$, then the map
\eqref{Eq:MainMap} from $\maclA _0(\cc d)$ to
$A(\cc d)$ is uniquely extendable to homeomorphisms (continuous
mappings) from $\maclA _{\flat _{\sigma _0}}'(\cc d)$ to
$\maclA _{\flat _{\sigma}}'(\cc d)$, and from
$\maclA _{0,\flat _{\sigma _0}}'(\cc d)$ to
$\maclA _{0,\flat _{\sigma}}'(\cc d)$;

\vrum

\item if $\sigma >\frac 12$ and $\sigma _0=\frac {\sigma}{2\sigma -1}$, then
the map \eqref{Eq:MainMap} from $\maclA _0(\cc d)$ to
$A(\cc d)$ is uniquely extendable to homeomorphisms (continuous
mappings) from $\maclA _{0,\flat _{\sigma _0}}'(\cc d)$ to
$\maclA _{\flat _{\sigma}}(\cc d)$, and from
$\maclA _{\flat _{\sigma _0}}'(\cc d)$ to
$\maclA _{0,\flat _{\sigma}}(\cc d)$;

\vrum

\item if $\sigma < \frac 12$ and $\sigma _0=\frac {\sigma}{1-2\sigma}$, then
the map \eqref{Eq:MainMap} from $\maclA _0(\cc d)$ to
$A(\cc d)$ is uniquely extendable to homeomorphisms (continuous
mappings) from $\maclA _{\flat _{\sigma _0}}(\cc d)$ to
$\maclA _{\flat _{\sigma}}(\cc d)$, and from
$\maclA _{0,\flat _{\sigma _0}}(\cc d)$ to
$\maclA _{0,\flat _{\sigma}}(\cc d)$.
\end{enumerate}
\end{thm}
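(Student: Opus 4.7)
The plan is to reduce the mapping in \eqref{Eq:MainMap} to a diagonal multiplier on power-series coefficients, and then to translate each of the three cases into a routine factorial bookkeeping exercise.

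First, I would observe that $T_\nu\colon F\mapsto\Pi_A(F\cdot\nu)$ is \emph{a priori} defined on $\maclA_0(\cc d)$ by Lemmas \ref{Lemma:HermiteGroch} and \ref{Lemma:HermiteGroch2}, and that these lemmas give the explicit action $T_\nu e_\alpha = \varsigma_\alpha\,\alpha!^{-1}\,e_\alpha$ for every $\alpha\in\nn d$, together with the upper bound $\varsigma_\alpha\lesssim t_2^{2\alpha}$ (and, when $\nu\in\maclR^*(\cc d)$, the lower bound $\varsigma_\alpha\gtrsim t_1^{2\alpha}e^{-|t_2|^2}$). Via the identification $T_\maclA$ from Remark \ref{Rem:PilPowerSpaces}, the operator $T_\nu$ is, on $\maclA_0(\cc d)$, conjugate to the diagonal multiplier $M\colon\{a(\alpha)\}\mapsto\{\varsigma_\alpha\,\alpha!^{-1}\,a(\alpha)\}$ on $\ell_0(\nn d)$. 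The theorem then reduces to showing that $M$ maps the relevant sequence space from Definition \ref{Def:SeqSpaces} continuously into the target space and, in the measure case, is a bijection with continuous inverse. Bijectivity is immediate from the lower bound, which gives $\varsigma_\alpha^{-1}\lesssim t_1^{-2\alpha}$, so the inverse multiplier $a(\alpha)\mapsto\alpha!\,\varsigma_\alpha^{-1}\,a(\alpha)$ satisfies the analogous continuity estimate with the roles of source and target interchanged.

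The continuity itself is a direct factorial computation using the relation between $\sigma_0$ and $\sigma$ in each case. Writing $|\varsigma_\alpha\alpha!^{-1}|\lesssim t^{|\alpha|}\alpha!^{-1}$ for a suitable fixed $t>0$, the multiplier $M$ transforms a bound $|a(\alpha)|\lesssim h^{|\alpha|}\alpha!^{\pm 1/(2\sigma_0)}$ into $(ht)^{|\alpha|}\alpha!^{\pm 1/(2\sigma_0)-1}$. In case (i), one has $\tfrac{1}{2\sigma_0}-1=\tfrac{1}{2\sigma}$ because $\sigma_0=\sigma/(2\sigma+1)$, so $M$ sends $\ell'_{\flat_{\sigma_0}}(\nn d)$ into $\ell'_{\flat_\sigma}(\nn d)$ and $\ell'_{0,\flat_{\sigma_0}}(\nn d)$ into $\ell'_{0,\flat_\sigma}(\nn d)$. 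In case (ii), with $\sigma>\tfrac12$ and $\sigma_0=\sigma/(2\sigma-1)$, one has $\tfrac{1}{2\sigma_0}-1=-\tfrac{1}{2\sigma}$, so the sign of the factorial exponent flips and $M$ sends $\ell'_{0,\flat_{\sigma_0}}(\nn d)$ into $\ell_{\flat_\sigma}(\nn d)$ and $\ell'_{\flat_{\sigma_0}}(\nn d)$ into $\ell_{0,\flat_\sigma}(\nn d)$. In case (iii), with $\sigma<\tfrac12$ and $\sigma_0=\sigma/(1-2\sigma)$, one has $-\tfrac{1}{2\sigma_0}-1=-\tfrac{1}{2\sigma}$, and $M$ sends $\ell_{\flat_{\sigma_0}}(\nn d)$ into $\ell_{\flat_\sigma}(\nn d)$ and $\ell_{0,\flat_{\sigma_0}}(\nn d)$ into $\ell_{0,\flat_\sigma}(\nn d)$. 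In each case the fixed factor $t^{|\alpha|}$ is absorbed into the arbitrary parameter $r$ in the norm of Definition \ref{Def:SeqSpaces}(2), respecting whether the quantification is \emph{for some} $r$ or \emph{for every} $r$, so that the Roumieu/Beurling types are preserved as claimed. Uniqueness of the continuous extension from $\maclA_0(\cc d)$ to the larger spaces then follows from the density stated in Remark \ref{Rem:SeqSpaces}(2).

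The main obstacle is keeping the Roumieu/Beurling bookkeeping straight: each of the three algebraic identities for $\sigma_0$ produces a different matching of \emph{for some} with \emph{for every} across source and target, and one must check that multiplication by the fixed factor $t^{|\alpha|}$ is compatible with the direction of each quantifier (in particular the inverse map, in the measure case, uses $t_1^{-2\alpha}$, which must likewise be absorbed consistently). Once this arithmetic is laid out, no further ideas are needed; the only remaining subtlety is that for $\nu\in\maclE'_{RS}(\cc d)$ the absence of a lower bound on $\varsigma_\alpha$ prevents inversion of the multiplier, which is why the conclusion in that case is only continuity rather than a homeomorphism.
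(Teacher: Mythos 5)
Your proposal is correct and takes essentially the same route as the paper: the paper likewise reduces \eqref{Eq:MainMap} to the diagonal multiplier $c(F,\alpha )\mapsto \varsigma _\alpha \alpha !^{-1}c(F,\alpha )$ via Lemmas \ref{Lemma:HermiteGroch} and \ref{Lemma:HermiteGroch2} together with Proposition \ref{Prop:MainResult1}(1), and then performs the same factorial arithmetic (encapsulated in Proposition \ref{Prop:MainResult2A} with $\tau =\pm 1/(2\sigma _0)$) while absorbing the geometric factors $t_1^{2\alpha}$ and $t_2^{2\alpha}$ into the quantified parameter $h$. Your matching of Roumieu/Beurling types in the three cases and your remark that the missing lower bound on $\varsigma _\alpha$ in the $\maclE _{RS}'$ case is what reduces the conclusion to mere continuity both agree with the paper's argument.
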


\par

The limit cases for the situations in the previous theorem are treated in the
next result.

\par

\begin{thm}\label{Thm:MainResult3}
Let $\nu \in \maclR ^*(\cc d)$ ($\nu \in \maclE _{RS}'(\cc d)$) and
$s=\sigma =\frac 12$.
Then the following is true:
\begin{enumerate}
\item The map \eqref{Eq:MainMap} from $\maclA _0(\cc d)$ to
$A(\cc d)$ is uniquely extendable to homeomorphisms (continuous
mappings) from $\maclA _{0,\flat _\sigma}'(\cc d)$ to
$\maclA _{0,s}'(\cc d)$, and from $\maclA _{\flat _\sigma}'(\cc d)$ to
$\maclA _{0,s}(\cc d)$;

\vrum

\item The map \eqref{Eq:MainMap} from $\maclA _0(\cc d)$ to
$A(\cc d)$ is uniquely extendable to homeomorphisms (continuous
mappings) from $\maclA _{0,s}'(\cc d)$ to
$\maclA _{\flat _\sigma}(\cc d)$, and from
$\maclA _{0,s}(\cc d)$ to
$\maclA _{0,\flat _\sigma}(\cc d)$.
\end{enumerate}
\end{thm}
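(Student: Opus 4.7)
The plan is to reduce everything to a diagonal multiplier on sequence spaces. By Lemma \ref{Lemma:HermiteGroch} (respectively Lemma \ref{Lemma:HermiteGroch2}), the operator \eqref{Eq:MainMap} acts on $F\in\maclA _0(\cc d)$ as
\begin{equation*}
\sum _{\alpha \in \nn d}c(F,\alpha )e_\alpha \;\longmapsto \; \sum _{\alpha \in \nn d}\varsigma _\alpha \alpha !^{-1}c(F,\alpha )e_\alpha ,
\end{equation*}
with $\varsigma _\alpha$ satisfying the two-sided estimate \eqref{Eq:Estvarsigma} in the $\maclR ^*(\cc d)$-case and only the upper estimate \eqref{Eq:Estvarsigma2} in the $\maclE _{RS}'(\cc d)$-case. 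After transporting through $T_{\maclA}$ from Remark \ref{Rem:PilPowerSpaces}, this becomes the diagonal multiplier
\begin{equation*}
M_\nu \, :\, \{ c(\alpha )\} _\alpha \mapsto \{ \varsigma _\alpha \alpha !^{-1}c(\alpha )\} _\alpha
\end{equation*}
on the sequence spaces of Definition \ref{Def:SeqSpaces}. Since $\ell _0(\nn d)$ is dense in each ambient space by Remark \ref{Rem:SeqSpaces}(2), the unique extension from $\maclA _0(\cc d)$ exists as soon as $M_\nu$ is shown to be continuous, and it is a homeomorphism precisely when $M_\nu$ admits a continuous inverse.

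The crucial feature of the limit case $s=\sigma =\frac 12$ is that the estimates defining the $\flat _{1/2}$-spaces carry an $\alpha !^{\pm 1}$-factor, while those defining the $s=\frac 12$ spaces are purely geometric, of the form $e^{\pm r|\alpha |}$, equivalently $h^{|\alpha |}$. Multiplication by $\varsigma _\alpha \alpha !^{-1}$ consumes exactly one factorial weight and, by \eqref{Eq:Estvarsigma}, replaces it with a geometric factor sandwiched between constant multiples of $(\min _j t_{1,j}^2)^{|\alpha |}$ and $(\max _j t_{2,j}^2)^{|\alpha |}$. This is exactly the bridge between the two types of estimates, and the quantifier (``for some $r$'' versus ``for every $r$'') is inert because the bridge factor is a fixed geometric sequence. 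I would then verify each claim by direct substitution; e.g.\ for the first half of (1), the bound $|c(F,\alpha )|\lesssim r^{|\alpha |}\alpha !$ for some (resp.\ every) $r>0$ yields $|c(G,\alpha )|\lesssim (r\max _j t_{2,j}^2)^{|\alpha |}$ with the same quantifier, and conversely, using the lower bound, $|c(G,\alpha )|\lesssim e^{r|\alpha |}$ yields $|c(F,\alpha )|\lesssim (e^r/\min _j t_{1,j}^2)^{|\alpha |}\alpha !$. The second half of (1) and both parts of (2) are obtained by the same two lines with source and target or ``some''/``every'' interchanged.

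The main obstacle --- essentially bookkeeping --- is to keep the inductive and projective limit topologies under control. The point is that $M_\nu$ restricts to a bounded map $\ell _{r,\flat _{1/2}}^{\infty ,*}(\nn d)\to \ell _{r',1/2}^{\infty ,*}(\nn d)$ for $r'=r\max _j t_{2,j}^2$, and, when the lower bound in \eqref{Eq:Estvarsigma} is available, to a bounded inverse in the opposite direction with a similarly explicit constant; continuity on the relevant inductive or projective limits is then automatic. In the subcase $\nu \in \maclE _{RS}'(\cc d)$ only the upper bound on $\varsigma _\alpha$ is available, so $M_\nu$ may fail to be invertible (the distribution $\nu _0$ in Lemma \ref{Lemma:HermiteGroch2} can annihilate individual $\phi _\alpha$) and we retain only continuity, which matches the weaker conclusion stated.
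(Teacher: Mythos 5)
Your proposal is correct and follows essentially the same route as the paper: the paper also reduces \eqref{Eq:MainMap} to the diagonal multiplier $c(F,\alpha )\mapsto \varsigma _\alpha \alpha !^{-1}c(F,\alpha )$ via Lemma \ref{Lemma:HermiteGroch} and Proposition \ref{Prop:MainResult1}\,(1), and then obtains Theorem \ref{Thm:MainResult3} from Proposition \ref{Prop:MainResult2A} with $\tau =1$ for (1) and $\tau =0$ for (2), which is exactly your ``one factorial consumed, geometric factor left over'' computation. Your remark that only the upper bound on $\varsigma _\alpha$ survives for $\nu \in \maclE _{RS}'(\cc d)$, whence only continuity, matches the paper's parenthetical conclusion.
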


\par

\begin{rem}
Since
$$
\mascE '(\cc d)\cap L^\infty (\cc d) \subseteq \mascE '(\cc d)
\subseteq \maclE _s'(\cc d),
$$
Theorem \ref{Thm:MainResult1} still holds true after $\maclE _s'$
has been replaces by $\mascE '$ in (6).
\end{rem}

\par

%By Lemma \ref{Lemma:HermiteGroch} and \eqref{Eq:BasicMap},
%the only possible extension of $F\mapsto \Pi _A(F\cdot \nu )$ is to let
%%%
%\begin{align}
%\Pi _A(F\cdot \nu)(z) &= \sum _{\alpha \in \nn d}
%\left (
%\varsigma _\alpha \alpha !^{-1}c(F,\alpha )
%\right )
%e_\alpha (z)
%\label{Eq:ExtPiA}
%\intertext{when}
%F(z) &= \sum _{\alpha \in \nn d}c(F,\alpha )e_\alpha (z).
%\label{Eq:FExp}
%\end{align}
%%%

\par

The following result is an essential part of the proof of
Theorem \ref{Thm:MainResult1}.

\par

\begin{prop}\label{Prop:MainResult1}
Let $r_0\in \mathbf R_+$,
$s\in [0,\frac 12)$ and $\nu \in \maclR ^*(\cc d)$ be
fixed, and let
$\varsigma _\alpha $ be as in \eqref{Eq:varsigmaDef}.
Then the following is true:
\begin{enumerate}
\item The map $F\mapsto \Pi _A(F\cdot \nu )$ from $\maclA _0(\cc d)$
to $A(\cc d)$ is uniquely extendable to a homeomorphism on
$\maclA _0'(\cc d)$, and
\begin{equation}\label{Eq:OpCoeffRel}
c(\Pi _A(F\cdot \nu ),\alpha ) = \varsigma _\alpha \alpha !^{-1}
c(F,\alpha ),
\qquad F\in \maclA _0'(\cc d),\  \alpha \in \nn d\text ;
\end{equation}

\vrum

\item it holds
\begin{align}
|c(F,\alpha )|
&\lesssim
e^{-r|\alpha |^{\frac 1{2s}}}
\label{Eq:EstCoeffSmalls1}
\intertext{for some $r\in \mathbf R_+$ such that
$r<r_0$, if and only if}
|c(\Pi _A(F\cdot \nu ),\alpha )|
&\lesssim
e^{-r|\alpha |^{\frac 1{2s}}}
\label{Eq:EstCoeffSmalls2}
\end{align}
for some $r\in \mathbf R_+$ such that $r<r_0$;

\vrum

\item it holds
\begin{align}
|c(F,\alpha )|
&\lesssim
e^{r|\alpha |^{\frac 1{2s}}}
\label{Eq:EstCoeffSmalls1B}
\intertext{for some $r\in \mathbf R_+$ such that
$r<r_0$, if and only if}
|c(\Pi _A(F\cdot \nu ),\alpha )|
&\lesssim
e^{r|\alpha |^{\frac 1{2s}}}
\label{Eq:EstCoeffSmalls2B}
\end{align}
for some $r\in \mathbf R_+$ such that $r<r_0$.
\end{enumerate}
\end{prop}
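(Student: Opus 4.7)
The plan is to reduce the whole proposition to an analysis of the multiplier sequence $\mu_\alpha := \varsigma_\alpha \alpha!^{-1}$, whose tight two-sided control is supplied by \eqref{Eq:Estvarsigma} in Lemma~\ref{Lemma:HermiteGroch}. Indeed, \eqref{Eq:BasicMap} already tells us that $F\mapsto \Pi_A(F\cdot \nu)$ is diagonal in the basis $\{e_\alpha\}$, multiplying $e_\alpha$ by $\mu_\alpha$, so at the coefficient level the operator reads simply $c(F,\alpha)\mapsto \mu_\alpha c(F,\alpha)$. Once this is made precise, each of (1), (2) and (3) becomes a short comparison question about the sequence $\mu_\alpha$.

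For part (1), I would first observe that when $F\in\maclA_0(\cc d)$ the identity \eqref{Eq:OpCoeffRel} follows by termwise application of \eqref{Eq:BasicMap} and linearity. The right-hand side of \eqref{Eq:OpCoeffRel} makes sense for any formal coefficient sequence, and I would use it to \emph{define} the extension to all of $\maclA_0'(\cc d)$. Since the topology on $\maclA_0'(\cc d)$ (transferred from $\ell_0'(\nn d)$ via $T_\maclA$) is nothing more than coefficient-wise convergence, and since \eqref{Eq:Estvarsigma} guarantees $\mu_\alpha>0$ for every $\alpha$, multiplication by $\{\mu_\alpha\}$ is automatically a bicontinuous bijection of $\maclA_0'(\cc d)$, with continuous inverse given by multiplication by $\{\mu_\alpha^{-1}\}$. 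Uniqueness of the extension then follows from density of $\maclA_0(\cc d)$ in $\maclA_0'(\cc d)$.

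For parts (2) and (3) the argument becomes a short computation. Assume \eqref{Eq:EstCoeffSmalls1} (the case \eqref{Eq:EstCoeffSmalls1B} is identical after reversing the sign of $r$). By \eqref{Eq:OpCoeffRel} and the upper half of \eqref{Eq:Estvarsigma},
\[
|c(\Pi_A(F\cdot\nu),\alpha)| \lesssim t_2^{2\alpha}\alpha!^{-1}\, e^{-r|\alpha|^{1/(2s)}},
\]
and since $t_2^{2\alpha}\alpha!^{-1}$ is super-exponentially small in $|\alpha|$ (Stirling), this already yields \eqref{Eq:EstCoeffSmalls2} with the \emph{same} $r$. Conversely, from \eqref{Eq:EstCoeffSmalls2} the lower half of \eqref{Eq:Estvarsigma} yields
\[
|c(F,\alpha)| \lesssim t_1^{-2\alpha} e^{|t_2|^2}\alpha!\, e^{-r|\alpha|^{1/(2s)}},
\]
and because $s<\tfrac12$, i.e.\ $1/(2s)>1$, Stirling gives $\alpha!\, t_1^{-2\alpha}\lesssim e^{\epsilon|\alpha|^{1/(2s)}}$ for every $\epsilon>0$. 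Choosing $0<\epsilon<r$ produces \eqref{Eq:EstCoeffSmalls1} with $r'=r-\epsilon\in(0,r_0)$. Part (3) follows identically with the sign of $r$ reversed throughout.

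The only subtle point is the Stirling-based comparison $\alpha!\lesssim e^{\epsilon|\alpha|^{1/(2s)}}$, which relies crucially on the hypothesis $s<\tfrac12$ (equivalently $1/(2s)>1$) and must be applied with an $\epsilon$ small enough that the new rate $r'$ remains in $(0,r_0)$. Since the hypothesis provides $r<r_0$ with positive room to spare, this is a bookkeeping issue rather than a genuine difficulty; all the analytic weight of the argument has already been absorbed into Lemma~\ref{Lemma:HermiteGroch}.
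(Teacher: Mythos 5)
Your proposal is correct and follows essentially the same route as the paper: reduce to the diagonal action $c(F,\alpha)\mapsto \varsigma_\alpha\alpha!^{-1}c(F,\alpha)$ via Lemma \ref{Lemma:HermiteGroch}, and then use $t^\alpha\alpha!\lesssim e^{r|\alpha|^{1/(2s)}}$ for every $r>0$ (valid since $s<\tfrac12$) together with $t_1^{2\alpha}\lesssim\varsigma_\alpha\lesssim t_2^{2\alpha}$ to see that the multiplier only perturbs the exponential rate by an arbitrarily small amount. The paper packages this as the two-sided bound $e^{-r|\alpha|^{1/(2s)}}\lesssim \varsigma_\alpha\alpha!^{-1}|\,\cdot\,|\lesssim e^{r|\alpha|^{1/(2s)}}$ for every $r>0$, which is exactly your Stirling bookkeeping.
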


\par

Here it is understood that the signs in the exponents in
\eqref{Eq:EstCoeffSmalls1} and \eqref{Eq:EstCoeffSmalls2}
agree.

\par

\begin{proof}
The assertion (1) is an immediate consequence of
\eqref{Eq:BasicMap} in Lemma \ref{Lemma:HermiteGroch}.
In fact, by Lemma \ref{Lemma:HermiteGroch} and \eqref{Eq:BasicMap},
the only possible extension of $F\mapsto \Pi _A(F\cdot \nu )$ is to let
\begin{align}
\Pi _A(F\cdot \nu)(z) &= \sum _{\alpha \in \nn d}
\left (
\varsigma _\alpha \alpha !^{-1}c(F,\alpha )
\right )
e_\alpha (z)
\label{Eq:ExtPiA}
\intertext{when}
F(z) &= \sum _{\alpha \in \nn d}c(F,\alpha )e_\alpha (z),
\label{Eq:FExp}
\end{align}
%%
%by \eqref{Eq:BasicMap}, the only possible
%extension of $F\mapsto \Pi _A(F\cdot \nu)$ is to let
%%%
%\begin{multline}\label{Eq:ExtPiA}
%(\Pi _AF)(z) = \sum _{\alpha \in \nn d}
%\left (
%\varsigma _\alpha \alpha !^{-1}c(F,\alpha )
%\right )
%e_\alpha (z)
%\\[1ex]
%\text{when}\quad
%F(z) = \sum _{\alpha \in \nn d}c(F,\alpha )e_\alpha (z).
%\end{multline}
%%%
which obviously defines a continuous map on
$\maclA _0'(\cc d)$.

\par

Since
$$
t^\alpha \alpha ! \lesssim e^{r|\alpha |^{\frac 1{2s}}}
\quad \text{and}\quad
t_1^{2\alpha}\lesssim \varsigma _\alpha \lesssim t_2^{2\alpha},
$$
for every $r\in \mathbf R_+$, it follows from \eqref{Eq:OpCoeffRel} that
$$
e^{-r|\alpha |^{\frac 1{2s}}}
\lesssim
\frac {|c(\Pi _A(F\cdot \nu ),\alpha )|}{|c(F,\alpha )|}
\lesssim
e^{r|\alpha |^{\frac 1{2s}}}
$$
for every for every $r\in \mathbf R_+$. This gives (2) and (3).
\end{proof}

\par

In order to prove Theorem \ref{Thm:MainResult2} we need
the next proposition. The result is a straight-forward
consequence of \eqref{Eq:BasicMap}, \eqref{Eq:Estvarsigma}
in Lemma \ref{Lemma:HermiteGroch}
and Proposition \ref{Prop:MainResult1} (1). The details are
left for the reader.

\par

\begin{prop}\label{Prop:MainResult2A}
Let $h\in \mathbf R_+$, $\tau \in \mathbf R$,
$t_1,t_2\in \rr d_+$,
$\nu \in \maclR ^*_{t_1,t_2}(\cc d)$ and $\varsigma _\alpha $
be as in \eqref{Eq:varsigmaDef}.
Then
\begin{align}
|c(F,\alpha )|
&\lesssim
h^{|\alpha |}\alpha !^{\tau}\notag
\\[1ex]
&\Rightarrow \quad
|c(\Pi _A(F\cdot \nu ),\alpha )|
\lesssim
h^{|\alpha |}t_2^{2\alpha }\alpha !^{\tau -1}
\label{Eq:EstCoeffSigmaBig1}
\intertext{and}
|c(\Pi _A(F\cdot \nu ),\alpha )|
&\lesssim
h^{|\alpha |}t_1^{2\alpha }\alpha !^{\tau -1}
\notag
\\[1ex]
&\Rightarrow \quad
|c(F,\alpha )|
\lesssim
h^{|\alpha |}\alpha !^{\tau}
\label{Eq:EstCoeffSigmaBig2}
\end{align}
\end{prop}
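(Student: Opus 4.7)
The plan is to combine the exact coefficient formula from Proposition \ref{Prop:MainResult1}(1), namely $c(\Pi _A(F\cdot \nu ),\alpha ) = \varsigma _\alpha \alpha !^{-1}c(F,\alpha )$, with the two-sided bound $t_1^{2\alpha}e^{-|t_2|^2}\lesssim \varsigma _\alpha \lesssim t_2^{2\alpha}$ from \eqref{Eq:Estvarsigma}. Each of the two implications in the statement then reduces to a one-line arithmetic manipulation, so the entire proof is essentially bookkeeping.

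First I would prove \eqref{Eq:EstCoeffSigmaBig1}. Assuming $|c(F,\alpha )|\lesssim h^{|\alpha |}\alpha !^{\tau}$, the identity from Proposition \ref{Prop:MainResult1}(1) together with the upper bound $\varsigma _\alpha \lesssim t_2^{2\alpha}$ gives
$$
|c(\Pi _A(F\cdot \nu ),\alpha )| = \varsigma _\alpha \alpha !^{-1}|c(F,\alpha )| \lesssim t_2^{2\alpha}\alpha !^{-1}\cdot h^{|\alpha |}\alpha !^{\tau} = h^{|\alpha |}t_2^{2\alpha}\alpha !^{\tau -1},
$$
which is the desired conclusion.

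For the converse implication \eqref{Eq:EstCoeffSigmaBig2}, I invert the relation by using the lower bound $\varsigma _\alpha \gtrsim t_1^{2\alpha}e^{-|t_2|^2}$, obtaining
$$
|c(F,\alpha )| = \varsigma _\alpha ^{-1}\alpha !\,|c(\Pi _A(F\cdot \nu ),\alpha )| \lesssim e^{|t_2|^2}t_1^{-2\alpha}\alpha !\,|c(\Pi _A(F\cdot \nu ),\alpha )|.
$$
Plugging in the hypothesis $|c(\Pi _A(F\cdot \nu ),\alpha )|\lesssim h^{|\alpha |}t_1^{2\alpha}\alpha !^{\tau -1}$, the factors $t_1^{\pm 2\alpha}$ cancel and the exponential $e^{|t_2|^2}$ is an $\alpha$-independent constant which gets absorbed into the $\lesssim$, yielding $|c(F,\alpha )|\lesssim h^{|\alpha |}\alpha !^{\tau}$.

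There is no genuine obstacle: the only point worth checking is that the implicit constants produced by \eqref{Eq:Estvarsigma} are independent of $\alpha$, which is how that bound is stated, and that the extra factor $e^{|t_2|^2}$ appearing in the second step is a fixed constant depending only on $\nu$ (through its support). Once that is noted, both implications follow immediately.
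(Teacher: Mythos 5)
Your proposal is correct and follows exactly the route the paper indicates: it combines the coefficient identity $c(\Pi _A(F\cdot \nu ),\alpha ) = \varsigma _\alpha \alpha !^{-1}c(F,\alpha )$ from Proposition \ref{Prop:MainResult1}\,(1) with the two-sided bound \eqref{Eq:Estvarsigma}, the upper bound giving \eqref{Eq:EstCoeffSigmaBig1} and the lower bound (which also guarantees $\varsigma _\alpha >0$, so the inversion is legitimate) giving \eqref{Eq:EstCoeffSigmaBig2}. The paper leaves these details to the reader, and your write-up supplies precisely the intended bookkeeping.
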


\par

\begin{proof}[Proof of Theorems \ref{Thm:MainResult1}
and \ref{Thm:MainResult2}]
Theorem \ref{Thm:MainResult1} is a straight-forward consequence
of Proposition \ref{Prop:MainResult1}. The details are left
for the reader.

\par

By letting $\sigma >0$, $\sigma _0=\frac \sigma{2\sigma +1}$
and $\tau = \frac 1{2\sigma _0}$, then $\tau -1
=\frac 1{2\sigma}$. Hence \eqref{Eq:EstCoeffSigmaBig1}
and \eqref{Eq:EstCoeffSigmaBig2} give
\begin{equation}\label{Eq:MainResult2Case1}
\begin{alignedat}{2}
|c(F,\alpha )|
&\lesssim
h^{|\alpha |}\alpha !^{\frac 1{2\sigma _0}}&
\quad &\text{for some (for every)}\ h>0
\\[1ex]
&&&\Leftrightarrow
\\[1ex]
|c(\Pi _A(F\cdot \nu ),\alpha )|
&\lesssim
h^{|\alpha |}\alpha !^{\frac 1{2\sigma}}&
\quad &\text{for some (for every)}\ h>0.
\end{alignedat}
\end{equation}
Theorem \ref{Thm:MainResult2} (1) now follows from 
Proposition \ref{Prop:MainResult1} (1) and
\eqref{Eq:MainResult2Case1}.

\par

If instead $\sigma >\frac 12$,
$\sigma _0=\frac \sigma{2\sigma -1}$
and $\tau = \frac 1{2\sigma _0}$, then $\tau -1
=-\frac 1{2\sigma}$. Hence \eqref{Eq:EstCoeffSigmaBig1}
and \eqref{Eq:EstCoeffSigmaBig2} give
\begin{equation}\label{Eq:MainResult2Case2}
\begin{alignedat}{2}
|c(F,\alpha )|
&\lesssim
h^{|\alpha |}\alpha !^{\frac 1{2\sigma _0}}&
\quad &\text{for some (for every)}\ h>0
\\[1ex]
%\begin{split}
&&&\Leftrightarrow
%\end{split}
\\[1ex]
|c(\Pi _A(F\cdot \nu ),\alpha )|
&\lesssim
h^\alpha !^{-\frac 1{2\sigma}}&
\quad &\text{for some (for every)}\ h>0.
\end{alignedat}
\end{equation}
Theorem \ref{Thm:MainResult2} (2) now follows from 
Proposition \ref{Prop:MainResult1} (1) and
\eqref{Eq:MainResult2Case2}.

\par

If instead $\sigma <\frac 12$,
$\sigma _0=\frac \sigma{1-2\sigma}$
and $\tau = -\frac 1{2\sigma _0}$, then $\tau -1
=-\frac 1{2\sigma}$. Hence \eqref{Eq:EstCoeffSigmaBig1}
and \eqref{Eq:EstCoeffSigmaBig2} give
\begin{equation}\label{Eq:MainResult2Case3}
\begin{alignedat}{2}
|c(F,\alpha )|
&\lesssim
h^{|\alpha |}\alpha !^{-\frac 1{2\sigma _0}}&
\quad &\text{for some (for every)}\ h>0
\\[1ex]
&&&\Leftrightarrow 
\\[1ex]
|c(\Pi _A(F\cdot \nu ),\alpha )|
&\lesssim
h^\alpha !^{-\frac 1{2\sigma}}&
\quad &\text{for some (for every)}\ h>0.
\end{alignedat}
\end{equation}
Theorem \ref{Thm:MainResult2} (3) now follows from 
Proposition \ref{Prop:MainResult1} (1) and
\eqref{Eq:MainResult2Case3}.

\par

Finally, Theorem \ref{Thm:MainResult3} follows
by similar arguments, letting $\tau =1$ when
proving (1) and letting $\tau =0$ when proving
(2) in Theorem \ref{Thm:MainResult3}. The details are
left for the reader.
% First suppose that $\nu \in \maclR ^*(\cc d)$. By Lemma
% \ref{Lemma:HermiteGroch}, the only possible extension of
% $\Pi _A$ is to let
% %%
% \begin{equation}\label{Eq:ExtPiA}
% (\Pi _AF)(z) = \sum _{\alpha \in \nn d}
% \left (
% \varsigma _\alpha \alpha !^{-1}c(F,\alpha )
% \right )
% e_\alpha (z)
% \quad \text{when}\quad
% F(z) = \sum _{\alpha \in \nn d}c(F,\alpha )e_\alpha (z)
% \end{equation}
% %%
% \par
% First suppose that $s\in [0,\frac 12)$. Since
% $$
% t^\alpha \alpha ! \lesssim 
% \quad \text{and}\quad
% $$
% By similar arguments, using Lemma \ref{Lemma:HermiteGroch2} instead of
% Lemma \ref{Lemma:HermiteGroch}, the result also follows
% in the case when $\nu \in \maclE _{RS}'(\cc d)$. The details are left
% for the reader.
\end{proof}

\par

%%%%%%%%%%%%%%%%%%%%%%%%%%%%%%%%%%%%%%%%%%%%
\section{Characterizations of Pilipovi{\'c}
spaces}\label{sec3}
%%%%%%%%%%%%%%%%%%%%%%%%%%%%%%%%%%%%%%%%%%%%

\par

In this section we combine Lemma \ref{GrochSpaceBargm} with
Theorems \ref{Thm:MainResult1}--\ref{Thm:MainResult3} to
get characterizations of Pilipovi{\'c} spaces.

\par

We shall perform such characterizations by considering mapping properties
of extensions of the map
$$
F\mapsto \Theta _F =\Theta _{F,r}
$$
from $\maclA _0 (\cc {d})$ to $C^\infty (\rr d)$, where 
\begin{equation}\label{Eq:FMapDef}
\Theta _{F,r}(x)\equiv \iint _{D_r(0)} F(y+i\eta )
e^{-(\frac 12|x-y|^2+|y|^2+|\eta |^2)}
e^{i(\frac 12\scal y\eta -\scal x\eta )}\, dyd\eta .
\end{equation}
Here we have identified $D_r(0)\subseteq \cc d$ with the polydisc
$$
\sets {(x,\xi )\in \rr {2d}}{x_j^2+\xi _j^2<r_j^2,\ j=1,\dots ,d}
$$
in $\rr {2d}$ when $r=(r_1,\dots ,r_d)\in \rr d_+$.
%through the map $x+i\xi \mapsto (x,\xi )$ from $\cc d$ to $\rr {2d}$.
We notice that $\Theta _F$ equals $f$ in \eqref{Eq:fSTFTAdjDef},
if in addition $F\in L^\infty (\rr {2d})$.

\par

We recall that the Bargmann transform is homeomorphic between the spaces in
\eqref{Eq:PilSpaces} and \eqref{Eq:PowerSpaces} when $s_1\in \mathbf R_\flat$
and $s_2\in \overline {\mathbf R}_\flat$. The following results of Paley-Wiener types
for Pilipovi{\'c} spaces, follow from these facts and by some straight-forward
combinations of Lemma \ref{GrochSpaceBargm} and Theorems
\ref{Thm:MainResult1}--\ref{Thm:MainResult3}. The details are left for the reader.

\par

\begin{thm}\label{Thm:MainResult1AH}
Let $s_1\in [0,\frac 12)$, $s_2\in (0,\frac 12)$ and $r\in \rr d_+$.
Then the map \eqref{Eq:FMapDef} from $\maclA _0(\cc d)$ to
$C^\infty (\rr d)$ is uniquely extendable to homeomorphisms (continuous
mappings) from the spaces in \eqref{Eq:PilSpaces} to corresponding
spaces in \eqref{Eq:PowerSpaces}.
\end{thm}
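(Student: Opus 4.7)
The plan is to factor the map $F\mapsto \Theta_{F,r}$ through the operator $G\mapsto \Pi_A(G\cdot \nu)$ studied in Theorems \ref{Thm:MainResult1}--\ref{Thm:MainResult3}, exploiting the observation that restricting the integration in \eqref{Eq:FMapDef} to $D_r(0)$ is the same as multiplying the integrand by a bounded radial measure $\nu\in \maclR^*(\cc d)$. First I would note that \eqref{Eq:FMapDef} is nothing other than \eqref{Eq:fSTFTAdjDef} applied to $F\cdot \chi_{D_r(0)}$, extended by zero outside $D_r(0)$. The cutoff $\chi_{D_r(0)}\in L^\infty(\cc d)\cap \mascE'(\cc d)$ is radial symmetric in each complex variable, so Lemma \ref{GrochSpaceBargm} applies and gives, for $F\in \maclA_0(\cc d)$,
\begin{equation*}
\mathfrak V_d(\Theta_{F,r}) \;=\; \Pi_A\bigl(SF\cdot \widetilde\chi\bigr), \qquad \widetilde\chi = \chi_{D_{r/\sqrt 2}(0)},
\end{equation*}
where $(SF)(z) = (8\pi^5)^{d/4}F(\sqrt 2 z)$ is a constant multiple of a dilation. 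Writing $T_{\widetilde\chi}G = \Pi_A(G\cdot \widetilde\chi)$, this is the factorization $\Theta_{F,r} = \mathfrak V_d^{-1}\circ T_{\widetilde\chi}\circ S$.

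Next I would verify that each factor is a homeomorphism (respectively a continuous map) on the relevant spaces. The cutoff $\widetilde\chi$ lies in $\maclR^\infty(\cc d)\subseteq \maclR^*(\cc d)$, so Theorem \ref{Thm:MainResult1} applies to $T_{\widetilde\chi}$ and yields homeomorphisms on $\maclA_{s_2}(\cc d)$, $\maclA_{0,s_1}(\cc d)$, $\maclA_{0,s_1}'(\cc d)$ and $\maclA_{s_2}'(\cc d)$ in the stated ranges of $s_1,s_2$. The dilation $S$ multiplies the $\alpha$th power series coefficient by $(8\pi^5)^{d/4}2^{|\alpha|/2}$, a factor of the form $Ch^{|\alpha|}$; such factors are absorbable by every seminorm defining the spaces in \eqref{Eq:PowerSpaces}, so $S$ is a homeomorphism on each of them. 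Finally $\mathfrak V_d^{-1}$ is, by the discussion in Subsection \ref{subsec1.4}, a homeomorphism from each space in \eqref{Eq:PowerSpaces} onto the corresponding space in \eqref{Eq:PilSpaces}. Composing the three steps yields the asserted mapping properties of $F\mapsto \Theta_{F,r}$.

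Uniqueness of the extension follows from density of $\maclA_0(\cc d)$ in each of the four spaces, inherited from density of $\ell_0(\nn d)$ via the map $T_{\maclA}$. The main obstacle is essentially a continuity-and-density issue rather than a new estimate: one has to confirm that the identity $\mathfrak V_d\Theta_{F,r} = \Pi_A(SF\cdot \widetilde\chi)$, initially valid pointwise for $F\in \maclA_0(\cc d)$ via Lemma \ref{GrochSpaceBargm}, propagates by continuity to the formal power series setting, so that the three-term composition $\mathfrak V_d^{-1}\circ T_{\widetilde\chi}\circ S$ may legitimately be identified with the continuous extension of $F\mapsto \Theta_{F,r}$. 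Once this is in place the theorem follows without further analytic work beyond what is already established in Section \ref{sec2}.
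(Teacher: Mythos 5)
Your factorization $\Theta_{F,r}=\mathfrak V_d^{-1}\circ T_{\widetilde\chi}\circ S$, with the cutoff rescaled via Lemma \ref{GrochSpaceBargm} and the middle factor handled by Theorem \ref{Thm:MainResult1}, is exactly the ``straight-forward combination'' the paper invokes and leaves to the reader, so the proposal is correct and follows the paper's own route. (Note only that, as your argument shows, the map goes from the spaces in \eqref{Eq:PowerSpaces} to those in \eqref{Eq:PilSpaces} --- consistent with Theorem \ref{Thm:MainResult2AH} --- so the direction as printed in the statement appears to be a typo rather than a defect of your proof.)
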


\par

\begin{thm}\label{Thm:MainResult2AH}
Let $\sigma ,\sigma _0\in \mathbf R_+$ and $r\in \rr d_+$. Then the following is true:
\begin{enumerate}
\item If $\sigma _0=\frac \sigma {2\sigma +1}$, then the map
\eqref{Eq:FMapDef} from $\maclA _0(\cc d)$ to
$C^\infty (\rr d)$ is uniquely extendable to homeomorphisms
from $\maclA _{\flat _{\sigma _0}}'(\cc d)$ to
$\maclH _{\flat _{\sigma}}'(\rr d)$, and from
$\maclA _{0,\flat _{\sigma _0}}'(\cc d)$ to
$\maclH _{0,\flat _{\sigma}}'(\rr d)$;

\vrum

\item if $\sigma >\frac 12$ and $\sigma _0=\frac {\sigma}{2\sigma -1}$, then
the map \eqref{Eq:FMapDef} from $\maclA _0(\cc d)$ to
$C^\infty (\rr d)$ is uniquely extendable to homeomorphisms
from $\maclA _{0,\flat _{\sigma _0}}'(\cc d)$ to
$\maclH _{\flat _{\sigma}}(\rr d)$, and from
$\maclA _{\flat _{\sigma _0}}'(\cc d)$ to
$\maclH _{0,\flat _{\sigma}}(\rr d)$;

\vrum

\item if $\sigma < \frac 12$ and $\sigma _0=\frac {\sigma}{1-2\sigma}$, then
the map \eqref{Eq:FMapDef} from $\maclA _0(\cc d)$ to
$C^\infty (\rr d)$ is uniquely extendable to homeomorphisms
from $\maclA _{\flat _{\sigma _0}}(\cc d)$ to
$\maclH _{\flat _{\sigma}}(\rr d)$, and from
$\maclA _{0,\flat _{\sigma _0}}(\cc d)$ to
$\maclH _{0,\flat _{\sigma}}(\rr d)$.
\end{enumerate}
\end{thm}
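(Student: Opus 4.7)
The plan is to factor the map $F\mapsto \Theta _{F,r}$ through the Bargmann transform and reduce the three assertions to the corresponding cases of Theorem \ref{Thm:MainResult2}. First I would identify $D_r(0)\subseteq \cc d$ with its image in $\rr {2d}$ and let $\chi$ denote the characteristic function of the polydisc $D_{r/\sqrt 2}(0)\subseteq \cc d$, which is bounded, compactly supported, and radial symmetric in each complex variable, and hence belongs to $\maclR ^\infty (\cc d)\subseteq \maclR ^*(\cc d)$. For $F\in \maclA _0(\cc d)$, the integral \eqref{Eq:FMapDef} is precisely the right-hand side of \eqref{Eq:fSTFTAdjDef} with the $L^\infty (\cc d)\cap \mascE '(\cc d)$-function $\chi _{D_r(0)}\cdot F$ playing the role of $F$ there, so Lemma \ref{GrochSpaceBargm} together with the obvious identity $\chi _{D_r(0)}(\sqrt 2\, \cdo )=\chi _{D_{r/\sqrt 2}(0)}$ would give
$$
\mathfrak V_d(\Theta _{F,r})=\Pi _A(\chi \cdot F_0),
\qquad F_0(w)\equiv (8\pi ^5)^{\frac d4}F(\sqrt 2\, w).
$$

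Second, I would observe that the scaling $F\mapsto F_0$ acts on the power series coefficients by $c(F_0,\alpha )=(8\pi ^5)^{\frac d4}2^{\frac {|\alpha |}2}c(F,\alpha )$. Since the extra factor $2^{|\alpha |/2}$ is absorbed into the $h^{|\alpha |}$ appearing in the norms $\nm \cdo {\ell ^\infty _{r,\flat _\sigma}}$ and $\nm \cdo {\ell ^{\infty ,*}_{r,\flat _\sigma}}$ of Definition \ref{Def:SeqSpaces}, this scaling restricts to a homeomorphism on each of $\maclA _{\flat _\sigma}(\cc d)$, $\maclA _{0,\flat _\sigma}(\cc d)$, $\maclA _{\flat _\sigma}'(\cc d)$ and $\maclA _{0,\flat _\sigma}'(\cc d)$. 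Composing the scaling $F\mapsto F_0$, then $G\mapsto \Pi _A(\chi \cdot G)$, and finally $\mathfrak V_d^{-1}$, and invoking that $\mathfrak V_d$ is a homeomorphism from each space in \eqref{Eq:PowerSpaces} onto the corresponding space in \eqref{Eq:PilSpaces}, each of the three claimed homeomorphisms becomes a direct translation of the corresponding case (1)--(3) of Theorem \ref{Thm:MainResult2} applied with $\nu =\chi \in \maclR ^*(\cc d)$. This composition is automatically the \emph{unique} continuous extension of \eqref{Eq:FMapDef} from $\maclA _0(\cc d)$, thanks to the density of $\maclA _0(\cc d)$ in all the spaces occurring as domains (Remark \ref{Rem:SeqSpaces} (2)).

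The main obstacle, and the only step that is not purely formal, is the identification $\mathfrak V_d(\Theta _{F,r})=\Pi _A(\chi \cdot F_0)$ for $F\in \maclA _0(\cc d)$. Here one has to carefully track how the variable change $(y,\eta )\mapsto (\sqrt 2\, x,\sqrt 2\, \xi )$ hidden in Lemma \ref{GrochSpaceBargm} transforms the cut-off region $D_r(0)$ into the polydisc $D_{r/\sqrt 2}(0)$ and reproduces the constant $(8\pi ^5)^{d/4}$; once this is in place the remainder of the argument is routine bookkeeping combining Theorem \ref{Thm:MainResult2} with the homeomorphism property of $\mathfrak V_d$ between Pilipovi{\'c} spaces and their Bargmann images.
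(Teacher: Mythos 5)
Your argument is correct and is essentially the proof the paper intends: you combine Lemma \ref{GrochSpaceBargm} (to identify $\mathfrak V_d\Theta _{F,r}=\Pi _A(\chi \cdot F_0)$ with $\chi$ the characteristic function of the dilated polydisc $D_{r/\sqrt 2}(0)\in \maclR ^*(\cc d)$ and $F_0$ the scaled function), Theorem \ref{Thm:MainResult2} applied to $\nu =\chi$, and the homeomorphism property of $\mathfrak V_d$ between the spaces in \eqref{Eq:PilSpaces} and \eqref{Eq:PowerSpaces}, together with density of $\maclA _0(\cc d)$ for uniqueness. The paper explicitly leaves these details to the reader, and your bookkeeping (including the harmless coefficient factor $2^{|\alpha |/2}$ absorbed into $h^{|\alpha |}$) fills them in correctly.
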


\par

\begin{thm}\label{Thm:MainResult3AH}
Let $s=\sigma =\frac 12$ and $r\in \rr d_+$.
Then the following is true:
\begin{enumerate}
\item The map \eqref{Eq:FMapDef} from $\maclA _0(\cc d)$ to
$C^\infty (\rr d)$ is uniquely extendable to homeomorphisms (continuous
mappings) from $\maclA _{0,\flat _\sigma}'(\cc d)$ to
$\maclH _{0,s}'(\rr d)$, and from $\maclA _{\flat _\sigma}'(\cc d)$ to
$\maclA _{0,s}(\rr d)$;

\vrum

\item The map \eqref{Eq:FMapDef} from $\maclA _0(\cc d)$ to
$C^\infty (\rr d)$ is uniquely extendable to homeomorphisms (continuous
mappings) from $\maclA _{0,s}'(\cc d)$ to
$\maclH _{\flat _\sigma}(\rr d)$, and from
$\maclA _{0,s}(\cc d)$ to
$\maclH _{0,\flat _\sigma}(\rr d)$.
\end{enumerate}
\end{thm}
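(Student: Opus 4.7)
The plan is to reduce Theorem \ref{Thm:MainResult3AH} to Theorem \ref{Thm:MainResult3} by factoring $\Theta_{F,r}$ through the Bargmann transform. For any polynomial $F\in \maclA _0(\cc d)$, the product $F\cdot \chi$, with $\chi$ the characteristic function of $D_r(0)$, lies in $L^\infty(\cc d)\cap \mascE'(\cc d)$, so Lemma \ref{GrochSpaceBargm} applies and, after the change of variables built into \eqref{Eq:FtoF0}, yields
\[
\mathfrak V_d\Theta _{F,r} = \Pi _A(G\cdot \chi _0),
\]
where $G(z)=(8\pi ^5)^{d/4}F(\sqrt 2\, z)$ and $\chi _0$ is the characteristic function of the polydisc $D_{r/\sqrt 2}(0)$, regarded as an element of $\maclR ^*(\cc d)$ via Definition \ref{Def:RClass}. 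Denote the operator $G\mapsto \Pi _A(G\cdot \chi _0)$ by $T_{\chi _0}$.

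Next I would verify that the dilation $S:F\mapsto G$ is a continuous automorphism of each of the spaces listed in \eqref{Eq:PowerSpaces}, since the substitution $z\mapsto \sqrt 2\, z$ only rescales the parameters $r,h$ in the norms of Definition \ref{Def:SeqSpaces} and preserves the inductive and projective limits. Combined with Theorem \ref{Thm:MainResult3}, this produces homeomorphisms $T_{\chi _0}\circ S$ between exactly the pairs of power series spaces appearing in items (1) and (2) of Theorem \ref{Thm:MainResult3} (with $s=\sigma =\tfrac 12$). Composing with $\mathfrak V_d^{-1}$, which by Section \ref{subsec1.4} is a homeomorphism from each space in \eqref{Eq:PowerSpaces} onto the corresponding Pilipovi{\'c} space in \eqref{Eq:PilSpaces}, yields the factorization
\[
\Theta _{F,r} = \mathfrak V_d^{-1}\circ T_{\chi _0}\circ S,
\]
which directly produces the homeomorphisms asserted in parts (1) and (2) of Theorem \ref{Thm:MainResult3AH}.

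The main obstacle is that Lemma \ref{GrochSpaceBargm} is only stated for $F\in L^\infty (\cc d)\cap \mascE '(\cc d)$, whereas the theorem demands $\Theta _{F,r}$ be defined on the distribution spaces $\maclA _{0,\flat _\sigma}'(\cc d)$, $\maclA _{\flat _\sigma}'(\cc d)$, $\maclA _{0,s}'(\cc d)$ and $\maclA _{0,s}(\cc d)$. I would address this by density: $\maclA _0(\cc d)$ is dense in each of the spaces in \eqref{Eq:PowerSpaces} (by Remark \ref{Rem:SeqSpaces}(2) transported through $T_{\maclA}$). Since each of the three factors $S$, $T_{\chi _0}$ and $\mathfrak V_d^{-1}$ has already been shown to be continuous on the relevant ambient spaces, the identity $\Theta _{F,r}=\mathfrak V_d^{-1}(\Pi _A(G\cdot \chi _0))$, verified on the dense subspace $\maclA _0(\cc d)$, extends uniquely and continuously to the full spaces. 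This unique extension is the operator whose mapping properties are claimed.
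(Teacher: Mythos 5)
Your argument is correct and is essentially the proof the paper intends: you combine Lemma \ref{GrochSpaceBargm} with Theorem \ref{Thm:MainResult3} and the homeomorphism property of $\mathfrak V_d$ between the spaces in \eqref{Eq:PilSpaces} and \eqref{Eq:PowerSpaces}, which is precisely the straight-forward combination whose details the paper leaves to the reader, and the scaling $D_r(0)\mapsto D_{r/\sqrt 2}(0)$ together with the density argument fills those details in correctly. The only imprecision is your claim that the dilation $S$ is an automorphism of \emph{every} space in \eqref{Eq:PowerSpaces} --- this fails, for instance, for $\maclA _s(\cc d)$ with $s\ge \frac 12$, where the factor $2^{|\alpha |/2}$ overwhelms $e^{-r|\alpha |^{1/(2s)}}$ --- but it does hold for the four domain spaces $\maclA _{0,\flat _{1/2}}'(\cc d)$, $\maclA _{\flat _{1/2}}'(\cc d)$, $\maclA _{0,1/2}'(\cc d)$ and $\maclA _{0,1/2}(\cc d)$ actually needed here (whose weights are geometric or exactly exponential in $|\alpha |$), so the proof stands.
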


\par

\par

%\end{savenotes}

\end{document}